\newcommand*\LyXZeroWidthSpace{\hspace{0pt}}
\providecommand{\tabularnewline}{\\}
\numberwithin{equation}{section}
\numberwithin{figure}{section}
\theoremstyle{plain}
\newtheorem{thm}{\protect\theoremname}
  \theoremstyle{remark}
  \newtheorem{rem}[thm]{\protect\remarkname}
  \theoremstyle{definition}
  \newtheorem{defn}[thm]{\protect\definitionname}
  \theoremstyle{plain}
  \newtheorem{lem}[thm]{\protect\lemmaname}
  \providecommand{\definitionname}{Definition}
  \providecommand{\lemmaname}{Lemma}
  \providecommand{\remarkname}{Remark}
\providecommand{\theoremname}{Theorem}
\begin{document}

\title{Convergent approaches for the Dirichlet Monge-Ampère problem}

\maketitle
\begin{center}\author{Hajri  Imen \footnote{Higher Institute of Applied Studies in Humanities of Mahdia,5121 Mahdia, Tunisia, Email:hajri.imene2017@gmail.com.} Fethi Ben Belgacem \footnote{Laboratory of partial differential equations (LR03ES04), ISIMM, University of Monastir, El Manar, TUNISIA. Email: fethi.benbelgacem@isimm.rnu.tn }}\end{center}
\begin{abstract}
In this article, we introduce and study three numerical methods for
the Dirichlet Monge-Ampère equation in two dimensions. The approaches
consist in considering new equivalent problems. The latter are discretized
by a wide stencil finite difference discretization and monotone schemes
are obtained. Hence, we apply the Barles-Souganidis theory to prove
the convergence of the schemes and the Damped Newtons method is used
to compute the solutions of the schemes. Finally, some numerical results
are illustrated. 
\end{abstract}

\keywords{Monge-Ampere, Monotone scheme, Newton method.}

\section{Introduction}

We are interested in the numerical solution of the Monge-Ampère equation
with Dirichlet boundary condition
\begin{equation}
\textrm{(MAD)}\left\{ \begin{aligned}\mbox{det}\left(D^{2}u\left(x\right)\right)=f\left(x\right), & \textrm{ for }x\textrm{ in }\varOmega,\\
u\left(x\right)=\varphi\left(x\right), & \textrm{ for }x\textrm{ on }\partial\varOmega,\\
u\textrm{ is convex.}
\end{aligned}
\right.\label{eq:MAD}
\end{equation}

Where $\Omega$ is a convex bounded domain in $\mathbb{R}^{2},$ with
boundary $\partial\varOmega,$ $\left(D^{2}u\right),$ is the Hessian
of the function $u,$ $f\textrm{ and }\varphi$ are given functions.

We take the simplest boundary conditions. For more general operator
of Monge-Ampère and other boundary conditions, we mention for instance
\cite{Figali}. The convexity constraint is crucial for the (MAD).
It is required for the Monge-Ampère equation to be degenerate elliptic
and for (MAD) to have a unique solution. It is also needed for numerical
stability. The Monge-Ampère equation, has extensive applications,
it is strictly related to the \textquotedblleft prescribed Gauss curvature\textquotedblright{}
problem, see for instance \cite{Figali}. It appears also in affine
geometry, precisely, in the affine sphere problem and the affine maximal
surfaces problem, this was discussed in \cite{calabi,cheng yau,pogorelov,Trudingin1,trudinger2,trudinger3}.
Other applications appear in fluid mechanics, geometric optics, and
meteorology : for example, in semigeostrophic equations, the Monge-Ampère
equation is coupled with a transport equation, this is pointed out
in \cite{Figali}. The analysis of the regularity of the Monge-Ampere
equation is essential in the study of the regularity of the transp
ort problem. This, latter, has been employed in many areas. We only
briefly mention \cite{Delzano1,Delzano2,Budd} for mesh geneartion,\textcolor{black}{\cite{Rehman,Haker1,Haker2}}for
image registration, and \cite{Figali} for reflector design. Developing
an efficient numerical method has aroused a lot of interest, and large
standard techniques have been proposed. A first method to do so was
introduced in \cite{prusner} by using a discretization of the geometric
Alexandrov-Bakelman interpretation of solutions. Variational approaches
have been presented in \cite{Glowinski1,glowinski2}, more precisely,
the augmented Lagrangian approach and the least-squares approach.
But these methods needed more regularity than can be predicted for
solutions. A different approach was studied in \cite{Neilan1}, using
the vanishing moment method. The periodic case was treated in \cite{Gr=0000E9goire}. 

Although, the standard techniques, mentioned above, work well for
smooth solutions, and they fail for singular solutions, for more details,
see, for instance, the discussion in \cite{Ben Amou}. To overcome
these difficulties, we have to use the notion of viscosity solution
or Alexsandrov solution. In two dimension, a numerical method was
introduced in \cite{prusner}, which is geometric in nature, and converges
to the Alexsandrov solution. The method introduced in \cite{oberma08},,
in two dimension and improved in \cite{oberma11} for higher dimension,
uses the wide stencil scheme that converges to the viscosity solution,
which we briefly describe for this reason in the end of this section.

The following variant of the AM-GM inequalities, is the keystone of
our formulation introduced here.

For $A$ and $B$ two symmetric matrices, such that, $A,B\geq0.$
We have the following inequality 
\[
2\sqrt{\det\left(AB\right)}\leq Tr\left(AB\right).
\]

Where for symmetric matrices $M\geq0$ means $x^{T}Mx\geq0.$ 
\begin{rem}
We can deduce from the above inequality that for a smooth convex solution
$u$ of (\ref{eq:MAD}), one can deduce the following inequality 
\[
\Delta u-2\sqrt{f}\geq0.
\]
Let us define the function 
\[
\tilde{g}:=\Delta u-2\sqrt{f}.
\]
It is then straightforward to check that if $u$ is a smooth solution
of (\ref{eq:MAD}), then is indeed a solution of the linear Dirichlet
Poisson problem
\begin{equation}
\left(\mathcal{P}^{\tilde{g}}\right)\left\{ \begin{array}{c}
\Delta u=2\sqrt{f}+\tilde{g},\\
u_{|\Gamma}=\varphi,
\end{array}\right.\label{eq:Pgdilde}
\end{equation}
which can be easily descretized by any method of choice if the function
$\tilde{g}$ is known.

We finish this remark by mentioning that the convexity constraint
is essential to ensure uniqueness (for example, $u$ and $-u$ are
both solution of the Monge Ampère equation). For viscosity solution,
this constraint can be required by the equation 
\begin{equation}
\lambda_{1}\left(D^{2}u\right)\geq0,\label{eq:lamda1-1}
\end{equation}
 in the viscosity sense, see for instance \cite{ober08env,oberma08},
where $\lambda_{1}\left(D^{2}u\right)$ is the smallest eigenvalue
of the Hessian of $u$. However, for a twice continuously differentiable
function $u$, the convexity restriction is equivalent to requiring
that the eigenvalues \LyXZeroWidthSpace \LyXZeroWidthSpace of the
Hessian, $D^{2}u,$ are positives, which is approved by considering
the linear Poisson Dirichlet problem $\left(\mathcal{P}^{\tilde{g}}\right).$ 
\end{rem}
\textcolor{black}{The approaches that we follow, in the present paper,
are inspired by the idea developed in \cite{FBB} and} the wide stencil
finite difference discretization introduced in \cite{oberma08} and
\cite{oberma11} for viscosity solution of M-A equation in two and
higher dimensions that relies on a framework developed in {[}1{]}.
For clarity, we recall the full result in the next section.

\section{Viscosity solution and convergence theory of approximation schemes}

\subsection{Degenerate elliptic equations}

Let $F\left(x,r,p,X\right)$ be a continuous real valued function
defined on $\varOmega\times\mathbb{R}\times\mathbb{R}^{n}\times S^{n},$
with $S^{n}$ being the space of symmetric $n\times n$ matrices.
Consider the nonlinear, partial differential equation with Dirichlet
boundary conditions,
\[
\begin{cases}
F\left(x,u\left(x\right),Du\left(x\right),D^{2}u\left(x\right)\right)\left(x\right)=0 & \textrm{ for }x\textrm{ in }\varOmega\\
u\left(x\right)=g\left(x\right) & \textrm{ for }x\textrm{ in }\partial\varOmega.
\end{cases}
\]
Where $\varOmega$ is a domain in $\mathbb{R}^{n},$ $Du$ and $D^{2}u$
denote the gradient and Hessian of $u$, respectively.
\begin{defn}
\cite{oberma11}The equation $F$ is degenerate elliptic if
\[
F\left(x,r,p,X\right)\leq F\left(x,s,p,Y\right)\textrm{ whenever }r\leq s\textrm{ and }Y\leq X.
\]
Where $Y\leq X$ means that $Y-X$ is a nonnegative definite symmetric
matrix.
\end{defn}
The viscosity solution for the Monge-Ampère equation is defined in
\cite{oberma08}.
\begin{defn}
Let $u\in C\left(\varOmega\right)$ be convex and $f\geq0$ be continuous.
The function $u$ is a viscosity subsolution (supersolution) of the
Monge-Ampère equation in $\varOmega$ if whenever convex $\varphi\in C^{2}\left(\varOmega\right)$
and $x_{0}\in\varOmega$ are such that $\left(u-\varphi\right)\left(x\right)\leq\left(\geq\right)\left(u-\varphi\right)\left(x_{0}\right)$
for all $x$ in a neighborhood of $x_{0},$ then we must have 
\[
\det\left(D^{2}\phi\left(x_{0}\right)\right)\geq\left(\leq\right)f\left(x_{0}\right).
\]
 The function $u$ is a viscosity solution if it is both a viscosity
subsolution and supersolution.
\end{defn}
For the existence and uniqueness of viscosity solution for (\ref{eq:MAD}),
we mention the next result in \cite{Guti=0000E8rez},
\begin{thm}
Let $\varOmega\subseteq\mathbb{R}^{d}$ be abounded and strictly convex,
$g\in C\left(\partial\varOmega\right),$ $f\in C\left(\varOmega\right),$
with $f\geq0.$ Then there exists a unique convex viscosity solution
$u\in C\left(\bar{\varOmega}\right)$ of the problem (\ref{eq:MAD}).
\end{thm}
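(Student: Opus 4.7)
The plan is to combine the Alexandrov theory of convex generalized solutions with the known equivalence between Alexandrov and viscosity solutions for convex functions when the right-hand side is continuous and nonnegative. Once this equivalence is in hand, existence is produced by approximation and uniqueness follows from the Alexandrov comparison principle for Monge-Amp\`ere measures.

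For existence I would first approximate the data: pick strictly positive smooth $f_k\to f$ uniformly with $f_k\geq 1/k$, and smooth $g_k\to g$ uniformly on $\partial\Omega$. The classical Caffarelli--Nirenberg--Spruck theory yields strictly convex classical solutions $u_k\in C^{2}(\Omega)\cap C(\bar\Omega)$ with $\det D^{2}u_k=f_k$ and $u_k=g_k$ on $\partial\Omega$. A uniform $L^{\infty}$-bound is obtained by sandwiching $u_k$ between an affine lower barrier supplied by the convexity of $u_k$ and an explicit upper paraboloid $v(x)=A|x-x_{0}|^{2}+B$ chosen so that $\det D^{2}v\geq\|f\|_{\infty}+1$ and $v\geq g_{k}$ on $\partial\Omega$; the comparison principle then gives $u_k\leq v$. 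Convexity together with this $L^{\infty}$-bound makes the family $\{u_k\}$ equi-Lipschitz on every compact subset of $\Omega$.

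The main difficulty is to control the modulus of continuity of $u_k$ up to $\partial\Omega$, and this is where strict convexity of $\Omega$ enters decisively. For each $x_{0}\in\partial\Omega$, a supporting hyperplane cuts off arbitrarily small caps of $\bar\Omega$ containing $x_{0}$; on each such cap one builds two-sided barriers with prescribed oscillation at $x_{0}$ that, by comparison, dominate and are dominated by $u_k$ on the cap's relative boundary. Arzel\`a--Ascoli then extracts a subsequence converging uniformly on $\bar\Omega$ to a convex $u\in C(\bar\Omega)$ with $u=g$ on $\partial\Omega$. The stability of the Monge-Amp\`ere measure under local uniform convergence of convex functions, namely $\mu[u_k]\rightharpoonup\mu[u]$, upgrades $\det D^{2}u_k\,dx=f_k\,dx$ to $\mu[u]=f\,dx$, so $u$ is an Alexandrov solution. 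Since $f$ is continuous and nonnegative, a standard touching argument at smooth convex test functions shows that a convex Alexandrov solution is automatically a viscosity solution, completing the existence proof.

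Uniqueness is an immediate consequence of Alexandrov's comparison principle applied in both directions: if $u$ and $v$ are two convex viscosity solutions with the same boundary data, then $\mu[u]=f\,dx=\mu[v]$, which forces $u\leq v$ and $v\leq u$ in $\Omega$, hence $u\equiv v$. The step I expect to be the main obstacle is the uniform boundary continuity of the approximants $u_k$: it is precisely where the strict convexity hypothesis on $\Omega$ is used, and its failure would permit boundary layers in which the compactness argument would break down and the limit would in general not attain the prescribed boundary data.
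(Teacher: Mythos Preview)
The paper does not actually prove this theorem: it is quoted verbatim from Guti\'errez's monograph (reference \cite{Guti=0000E8rez} in the paper) and stated without argument, serving only as background for the convergence theory that follows. Your sketch therefore goes well beyond what the paper does, and in fact reproduces the standard route taken in that reference --- build an Alexandrov solution by approximation and barrier arguments, pass to the limit via weak stability of the Monge--Amp\`ere measure, invoke the equivalence of Alexandrov and viscosity solutions for continuous nonnegative $f$, and conclude uniqueness from the Alexandrov comparison principle.

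Two small points worth tightening. First, your barrier discussion has the roles reversed: a paraboloid $v$ with $\det D^{2}v\geq\|f\|_{\infty}+1$ is \emph{more} convex than $u_k$, so with $v\leq g_k$ on $\partial\Omega$ comparison yields $v\leq u_k$, i.e.\ $v$ is a lower barrier; the upper bound for $u_k$ comes for free from convexity, since a convex function on a convex domain attains its maximum on the boundary. Second, your uniqueness step tacitly uses the implication ``viscosity $\Rightarrow$ Alexandrov'' (so that $\mu[u]=f\,dx$), whereas in the existence step you only argued the reverse direction; both directions are indeed known for continuous $f\geq 0$, but you should flag that you are using the full equivalence. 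With these adjustments your plan is sound and matches the literature the paper cites.
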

The advantage of considering viscosity solutions come from the following
fundamental theorem, obtained in \cite{Souganidis}, which gives conditions
for convergence of approximation schemes to viscosity solution.
\begin{thm}
(Convergence of Approximation Schemes). Consider a degenerate elliptic
equation, for which there exist unique viscosity solutions. A consistent,
stable approximation scheme converges uniformly on compact subsets
to the viscosity solution, provided it is monotone.
\end{thm}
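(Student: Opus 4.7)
The plan is to follow the classical half-relaxed limits method. Write $u^{h}$ for a solution of the approximation scheme at mesh parameter $h>0$, and abbreviate the scheme as $S^{h}\bigl(x,u^{h}(x),u^{h}\bigr)=0$. Stability provides a uniform bound $\|u^{h}\|_{\infty}\le C$ independent of $h$, so one may introduce the upper and lower half-relaxed limits
\[
\bar{u}(x) := \limsup_{y\to x,\, h\to 0} u^{h}(y), \qquad \underline{u}(x) := \liminf_{y\to x,\, h\to 0} u^{h}(y),
\]
which are respectively upper and lower semicontinuous on $\bar{\varOmega}$ and satisfy $\underline{u}\le\bar{u}$ pointwise by construction.

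The crux is to prove that $\bar{u}$ is a viscosity subsolution and $\underline{u}$ a viscosity supersolution of $F=0$; I describe the subsolution step, the other being symmetric. Fix a smooth test $\phi$ and a point $x_{0}\in\varOmega$ at which $\bar{u}-\phi$ attains a strict local maximum. A standard extraction argument produces grid points $x_{h}\to x_{0}$ where $u^{h}-\phi$ attains a local maximum, with $u^{h}(x_{h})\to\bar{u}(x_{0})$. Setting $c_{h} := u^{h}(x_{h})-\phi(x_{h})\to 0$, one has $u^{h}\le \phi+c_{h}$ near $x_{h}$ with equality at $x_{h}$. Monotonicity of $S^{h}$ in the off-centre stencil entries then yields
\[
0 \;=\; S^{h}\bigl(x_{h},u^{h}(x_{h}),u^{h}\bigr) \;\ge\; S^{h}\bigl(x_{h},\phi(x_{h})+c_{h},\phi+c_{h}\bigr),
\]
and consistency of the scheme on the smooth function $\phi$ lets one pass to the limit $h\to 0$ to obtain $F\bigl(x_{0},\bar{u}(x_{0}),D\phi(x_{0}),D^{2}\phi(x_{0})\bigr)\le 0$, the required subsolution inequality. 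The analogous argument at a local minimum delivers the supersolution inequality for $\underline{u}$.

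With both inequalities established, uniqueness of the viscosity solution (equivalently a comparison principle, which is implicit in the hypothesis) forces $\bar{u}\le u\le\underline{u}$ on $\bar{\varOmega}$, where $u$ is the unique viscosity solution. Together with $\underline{u}\le\bar{u}$ this yields $\bar{u}=\underline{u}=u$, so in particular $u$ is continuous. A standard compactness argument upgrades equality of the upper and lower half-relaxed limits to a continuous function into locally uniform convergence $u^{h}\to u$: if this failed, one could extract $x_{h_{k}}\to x^{\ast}$ in a compact set with $|u^{h_{k}}(x_{h_{k}})-u(x^{\ast})|$ bounded away from zero, contradicting the definition of either $\bar{u}(x^{\ast})$ or $\underline{u}(x^{\ast})$.

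The main obstacle is the sub/supersolution step, where monotonicity and consistency must be combined in a coordinated way: one has to verify that the discrete touching of $u^{h}$ by the shifted test function $\phi+c_{h}$ transfers cleanly through $S^{h}$, which is exactly where the full strength of monotonicity is used. A secondary technicality is the treatment of the Dirichlet condition in the viscosity sense at $\partial\varOmega$, which for problem (\ref{eq:MAD}) with continuous data and a strictly convex domain is routine.
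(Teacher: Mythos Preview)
Your proof is correct and is precisely the classical Barles--Souganidis argument via half-relaxed limits; there is no gap. However, you should be aware that the paper does \emph{not} supply its own proof of this theorem: it is quoted as a foundational result from \cite{Souganidis} and invoked as a black box, with the paper's effort going into verifying the hypotheses (consistency, monotonicity) for the specific schemes constructed later. So there is nothing to compare your argument against in the paper itself --- what you have written is essentially the proof that appears in the cited reference.
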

By the previous theorem, we need just a way to build a monotone finite
difference schemes, which represents a new challenge. In the sequel,
we recall here the basic framework introduced in \cite{oberma06},
for building a monotone scheme. 

Firstly, a finite difference equation take the form
\[
F^{i}\left[u\right]=F^{i}\left(u_{i},u_{i}-u_{j}|_{i\neq j}\right).
\]
 We say that a scheme is degenerate elliptic if the following holds
\cite{oberma06}:
\begin{defn}
The scheme $F$ is degenerate elliptic if $F^{i}$ is non-decreasing
in each variable.
\end{defn}
We are now ready to present the following theorem in \cite{oberma06}:
\begin{thm}
\label{thm:fixpoint}Under mild analytic conditions, degenerate elliptic
schemes are monotone, and non-expansive in the uniform norm. The iteration
\begin{equation}
u^{m+1}=u^{m}+dtF\left(u^{m}\right),\label{eq:FixPoint}
\end{equation}
 is a contraction in $L^{\infty}$ provided $dt\leq K\left(F\right)^{-1},$
where $K\left(F\right)$ is the Lipschitz constant of the scheme,
regarded as a function from $\mathbb{R}^{N}\longrightarrow\mathbb{R}^{N}.$ 
\end{thm}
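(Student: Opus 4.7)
My plan is to extract from the componentwise monotonicity of each $F^i$ an M-matrix (weakly diagonally-dominant) structure for the generalized Jacobian of $F$, and then deduce monotonicity, non-expansiveness, and the announced contraction property by a single row-sum argument.

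The first step is to establish a mean-value representation of the increment. Writing $F^i(u)-F^i(v)=\int_0^1\frac{d}{ds}F^i\bigl(v+s(u-v)\bigr)\,ds$ and differentiating via the chain rule in the arguments $\bigl(u_i,(u_i-u_j)_{j\neq i}\bigr)$, the hypothesis that $F^i$ is non-decreasing in each of its slots forces every partial to be non-negative. Collecting terms I would obtain
\[
F^i(u)-F^i(v)=A_i(u_i-v_i)-\sum_{j\neq i}B_{ij}(u_j-v_j),
\]
with $A_i,B_{ij}\ge 0$ and the crucial weak diagonal-dominance identity $A_i-\sum_{j\neq i}B_{ij}=\int_0^1\partial_{u_i}F^i\,ds\ge 0$. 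Read as a matrix, this says precisely that the Jacobian has non-positive off-diagonal entries and non-negative row sums --- the Barles--Souganidis notion of monotonicity for the scheme, which is the first claim.

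For the remaining claims I would plug the above identity into the iteration $T(u):=u+dt\,F(u)$ (orienting the sign so that zeros of $F$ are fixed points) to get
\[
(T(u)-T(v))_i=(1-dt\,A_i)(u_i-v_i)+dt\sum_{j\neq i}B_{ij}(u_j-v_j).
\]
The Lipschitz hypothesis gives $\sup_i A_i\le K(F)$, so imposing $dt\le K(F)^{-1}$ makes every coefficient on the right non-negative, which yields monotonicity of $T$; and the row sum of these coefficients equals $1-dt\bigl(A_i-\sum_{j\neq i}B_{ij}\bigr)\le 1$, which expresses $(T(u)-T(v))_i$ as a sub-convex combination of the entries of $u-v$, so that $\|T(u)-T(v)\|_\infty\le\|u-v\|_\infty$. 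A strict $L^\infty$-contraction with rate $1-c\,dt$ follows as soon as the zero-order partial $\partial_{u_i}F^i$ is bounded below by some $c>0$; for problems like (MAD), where the PDE does not involve $u$ itself, one only gets non-strict contraction and must supplement it with a separate uniqueness/compactness argument to extract convergence.

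The principal obstacle is the mean-value step under weak regularity: when $F^i$ is only Lipschitz, the integral identity has to be interpreted through Rademacher differentiability or the Clarke subdifferential, and one must check that every admissible selection inherits the correct sign from the monotonicity hypothesis. This bookkeeping, together with the identification of $\sup_i A_i$ with the global $\ell^\infty$-Lipschitz constant $K(F)$, is exactly what the phrase ``mild analytic conditions'' covers; once granted, everything else is linear algebra.
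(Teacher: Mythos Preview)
The paper does not actually prove this theorem: it is quoted verbatim as a result from \cite{oberma06} (Oberman 2006), with no argument supplied. So there is no ``paper's own proof'' to compare against; your proposal is being measured against a citation.

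That said, your outline is essentially the standard proof and is the one given in \cite{oberma06}. The mean-value/M-matrix decomposition, the row-sum bound, and the identification of the strict-contraction constant with the zero-order partial are all correct. One point deserves tightening: you define $T(u)=u+dt\,F(u)$ but then write $(1-dt\,A_i)$ in the increment formula. With the paper's sign convention (degenerate elliptic means $F^i$ non-decreasing in $u_i$ and in each $u_i-u_j$), the map $u\mapsto u+dt\,F(u)$ has off-diagonal coefficients $-dt\,B_{ij}\le 0$ and cannot be monotone; the Euler map that works is $u\mapsto u-dt\,F(u)$, and indeed the iteration \eqref{eq:FixPoint} as printed in this paper carries the wrong sign relative to Definition~6. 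Your parenthetical ``orienting the sign'' is the right instinct but should be made explicit: state that you take $T(u)=u-dt\,F(u)$ and note that the fixed points are unchanged. With that correction the rest of your argument goes through, including the observation that $\sup_i A_i\le K(F)$ (each $A_i$ is a path-average of $\partial F^i/\partial u_i$, which is bounded by the $\ell^\infty$-Lipschitz constant via perturbation in the $i$-th coordinate alone).
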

We end this paragraphre by the next result, proven in \cite{oberma06} 
\begin{thm}
A proper, locally Lipschitz continuous degenerate elliptic scheme
has a unique solution which is stable in the $l^{\infty}$ norm.
\end{thm}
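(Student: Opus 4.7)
The plan is to derive all three conclusions from the previous Theorem~\ref{thm:fixpoint} by a Banach fixed point argument. The scheme $F$ acts on the finite-dimensional space $\mathbb{R}^{N}$ (one unknown per interior grid node), so $(\mathbb{R}^{N},\|\cdot\|_{\infty})$ is a complete metric space. By Theorem~\ref{thm:fixpoint}, the iteration map $T_{dt}(u) := u + dt\,F(u)$ is a contraction on this space as soon as $dt \leq K(F)^{-1}$, with some contraction constant $c<1$.

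Existence and uniqueness then follow immediately from the Banach fixed point theorem: $T_{dt}$ admits a unique fixed point $u^{*}\in\mathbb{R}^{N}$, and the identity $T_{dt}(u^{*})=u^{*}$ is equivalent to $F(u^{*})=0$. Since any zero of $F$ is automatically a fixed point of $T_{dt}$, the uniqueness transfers to solutions of the scheme.

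For the $l^{\infty}$ stability bound I would use the standard a posteriori estimate associated to a contraction: for every $v\in\mathbb{R}^{N}$,
\[
(1-c)\,\|u^{*}-v\|_{\infty} \;\le\; \|T_{dt}(v)-v\|_{\infty} \;=\; dt\,\|F(v)\|_{\infty}.
\]
Choosing $v$ to be the grid function that matches the prescribed boundary values on boundary nodes and is zero in the interior converts this into an $l^{\infty}$ bound on $u^{*}$ expressed solely in terms of $\|f\|_{\infty}$ and $\|\varphi\|_{\infty}$.

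The hard part will be to keep the contraction constant $c$ bounded away from $1$ uniformly in the mesh size, so that the stability estimate does not degenerate as $h\to 0$. This is precisely where the \emph{proper} hypothesis enters: it provides a quantitative strict monotonicity of $F^{i}$ in its diagonal argument $u_{i}$, which, combined with the local Lipschitz continuity, yields a mesh-independent contraction rate and hence a mesh-independent stability constant in the final estimate.
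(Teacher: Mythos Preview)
The paper does not actually prove this theorem: it is quoted verbatim from \cite{oberma06}, introduced by the sentence ``We end this paragraph by the next result, proven in \cite{oberma06}'', and no argument is supplied. So there is no in-paper proof to compare against.

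That said, your sketch is essentially the argument Oberman gives in \cite{oberma06}: one shows that the explicit Euler map $u\mapsto u+dt\,F(u)$ (with the appropriate sign) is a strict $l^{\infty}$ contraction for small $dt$, and then invokes Banach's fixed point theorem. Two cautions. First, Theorem~\ref{thm:fixpoint} as stated in the paper is a bit loose: degenerate ellipticity alone only yields that the Euler map is \emph{non-expansive}; the strict contraction genuinely requires the \emph{proper} hypothesis (a uniform lower bound $\epsilon>0$ on $\partial F^{i}/\partial u_{i}$), so you cannot simply quote Theorem~\ref{thm:fixpoint} for the contraction constant $c<1$ --- you must derive it from properness plus the Lipschitz bound, as you indicate only at the end. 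Second, the contraction rate one obtains this way is of the form $1-\epsilon\,dt$ with $dt\le K(F)^{-1}$, and $K(F)$ typically scales like $h^{-2}$ for second-order schemes; so the rate does depend on the mesh, and a mesh-independent $l^{\infty}$ stability bound does \emph{not} fall out of the a~posteriori contraction estimate you wrote. In \cite{oberma06} stability is obtained instead by a discrete comparison/maximum-principle argument against suitable barrier grid functions, not from the Banach estimate. Your existence and uniqueness are fine; the stability paragraph would need to be reworked along those lines.
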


\subsection{Wide stencil schemes}

\textcolor{black}{We finish this section by noting that wide stencil
schemes are required to build consistent, monotone schemes of degenerate
second order PDEs (see discussion in \cite{oberma08}). Wide stencil
schemes were built for the two-dimensional Monge-Ampère equation in
\cite{oberma08} and for the convex envelope in \cite{ober08env}.
Each approach considered here is a function of eigenvalues of the
Hessian. To fully discretize the equation (\ref{eq:Raylish-Labda})
for the eigenvalues of the Hessian on a finite difference grid, we
approximate the second derivatives by centered finite differences;
this is the spatial discretization, with parameter $h$. We consider
also a finite number of possible directions $\nu$ that lie on the
grid; this is the directional discretization, with parameter $d\theta.$
 The spatial resolution is improved by using more grid points, the
directional resolution is improved by increasing the size of the stencil.
So, a wide stencil is needed (see Fig \ref{Stencil 17})}

\begin{figure}
\includegraphics[width=7cm,height=5cm]{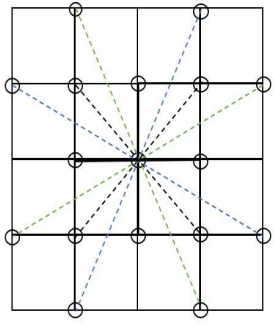}\label{Stencil 17}\caption{Grid for wide stencil 17 points, in two dimension}
\end{figure}

\section{First Formulation of the \textup{(MAD) in} two dimensions (method
A)}

\subsection{An equivalent problem}

Let us begin with a simple approach to illustrate the ideas. We can
rephrase, for instance, the (MAD) as the following:
\begin{equation}
\begin{cases}
\textrm{Find a positive function }g,\textrm{ such that}\\
\textrm{det}\left(D^{2}u^{g}\right)=\lambda_{1}\left[D^{2}u^{g}\right]\times\lambda_{2}\left[D^{2}u^{g}\right]=f,
\end{cases}\label{eq:P2}
\end{equation}
where: $u^{g}$ is the solution of 
\[
\begin{cases}
\Delta u^{g}=\lambda_{1}\left[D^{2}u^{g}\right]+\lambda_{2}\left[D^{2}u^{g}\right] & =2\sqrt{f}+g,\\
u_{|\Gamma}^{g} & =\varphi.
\end{cases}
\]

We are now ready to state a first example of our approches.
\begin{lem}
\label{lem:Lemma equiva}Provided the solution, u, of (\ref{eq:MAD})
is in $H^{2},$ there exists a unique positive function $\tilde{g}\in L^{2},$
such that $u=u^{\tilde{g}},$ where $u^{\tilde{g}}$ is the solution
of ($\mathcal{P}^{\tilde{g}}$). Conversely, if $u^{\bar{g}}$ is
solution of (\ref{eq:P2}) for some $\bar{g}>0,$ then $u^{\bar{g}}=u.$\end{lem}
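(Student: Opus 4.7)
The plan is to prove the two directions separately, relying on the AM-GM inequality and the uniqueness result for convex viscosity solutions of (MAD) already recorded in the paper.

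For the forward direction, I would define $\tilde g := \Delta u - 2\sqrt{f}$ directly, as already suggested in the Remark. The pointwise identity $\Delta u = 2\sqrt{f} + \tilde g$ together with $u_{|\Gamma} = \varphi$ makes $u$ a solution of $(\mathcal{P}^{\tilde g})$ by construction. Positivity $\tilde g \geq 0$ is exactly the AM-GM inequality recalled just before the lemma, applied to $A = D^{2}u \geq 0$ (by convexity of $u$) and $B = I$: indeed $2\sqrt{\det(D^{2}u)} \leq \operatorname{Tr}(D^{2}u)$, that is $2\sqrt{f} \leq \Delta u$. The $L^{2}$ regularity follows from $u \in H^{2}(\Omega)$, which gives $\Delta u \in L^{2}$, together with $f \in C(\bar\Omega) \subset L^{\infty}$ so $2\sqrt{f} \in L^{2}$. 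Uniqueness of $\tilde g$ in this direction is immediate: if $u = u^{g_{1}} = u^{g_{2}}$, then subtracting the two Poisson equations yields $g_{1} = g_{2}$ a.e.

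For the converse, suppose $\bar g > 0$ and $u^{\bar g}$ solves (\ref{eq:P2}). Denote by $\lambda_{1}, \lambda_{2}$ the eigenvalues of $D^{2} u^{\bar g}$. By the hypotheses we have the two identities
\begin{equation*}
\lambda_{1} + \lambda_{2} = \Delta u^{\bar g} = 2\sqrt{f} + \bar g, \qquad \lambda_{1}\lambda_{2} = \det(D^{2}u^{\bar g}) = f.
\end{equation*}
Thus $\lambda_{1}, \lambda_{2}$ are the roots of $t^{2} - (2\sqrt{f}+\bar g)\,t + f = 0$. Their sum is nonnegative and their product is nonnegative (since $f \geq 0$), so both roots are $\geq 0$; equivalently $D^{2}u^{\bar g} \geq 0$, so $u^{\bar g}$ is convex. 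Since $u^{\bar g}$ is a convex function that satisfies $\det(D^{2}u^{\bar g}) = f$ in $\Omega$ and $u^{\bar g} = \varphi$ on $\partial \Omega$, it is a (strong, hence viscosity) convex solution of (MAD), and the uniqueness statement recalled earlier in the excerpt forces $u^{\bar g} = u$.

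The only real subtlety I anticipate is justifying that $u^{\bar g}$ is genuinely a viscosity/Alexandrov solution of (MAD) from the strong-sense identities above, so that the uniqueness theorem applies; this is essentially automatic when $u^{\bar g} \in H^{2}$, but one should note that convexity of $u^{\bar g}$ combined with the pointwise Monge-Ampère identity a.e.\ does reduce to the unique viscosity solution under the standing hypotheses. Everything else is algebraic manipulation (the quadratic) and invocation of AM-GM; the heart of the argument is that adding the auxiliary unknown $\tilde g$ is exactly the deficit in AM-GM, so the correspondence $u \leftrightarrow \tilde g = \Delta u - 2\sqrt{f}$ is a bijection between convex solutions of (MAD) and positive solutions of (\ref{eq:P2}).
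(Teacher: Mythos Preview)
Your proof is correct and follows essentially the same route as the paper: define $\tilde g=\Delta u-2\sqrt{f}$ and use AM--GM for the forward direction, and for the converse deduce convexity of $u^{\bar g}$ from $\det(D^{2}u^{\bar g})=f\geq 0$ together with $\Delta u^{\bar g}=2\sqrt{f}+\bar g\geq 0$, then invoke uniqueness for (MAD). The paper's argument is terser (it simply notes $\det(D^{2}u^{\bar g})=f>0$ and $\Delta u^{\bar g}\geq 0$ imply convexity, without writing out the quadratic), but your added care about the $L^{2}$ regularity of $\tilde g$ and the explicit uniqueness of $\tilde g$ fills gaps the paper leaves implicit.
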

\begin{proof}
Let $u$ be a solution of (\ref{eq:MAD}). From the above, one can
see easily, that $u=u^{\tilde{g}}.$ 

Conversely, if $u^{\bar{g}}$ is a solution of (\ref{eq:P2}), we
can clearly see that 
\[
\begin{cases}
\textrm{det}\left(D^{2}u^{\bar{g}}\right)=f>0\\
\Delta u^{\bar{g}}\geq0.
\end{cases}
\]
It follows that $u^{\bar{g}}$ is convex and satisfies (\ref{eq:MAD}).\end{proof}
\begin{rem}
We notice that according to the result in \cite{Siltakoski}, we have
equivalence of viscosity and weak solutions for the Poisson problem.
This motivates us to build a convergent scheme to the viscosity solution
of Poisson problem $\left(\mathcal{P}^{\tilde{g}}\right)$ through
the discretization of the (MAD) problem. The viscosity solution $u^{\tilde{g}}$
of $\left(\mathcal{P}^{\tilde{g}}\right)$ will be equivalent to the
weak solution of (MAD) problem in the distributional sense.
\end{rem}

\section{Discretization of the problem (\ref{eq:P2})}

Let us consider a regular and uniform cartesian grid, consider the
stencil at the reference point $x_{0}$ consist of the neighbors $x_{1},...,x_{N}$
(as in Figure 1). We can define $v_{i}$ in polar coordinates by 
\[
v_{i}=x_{i}-x_{0}=h_{i}v_{\theta_{i}}.
\]
 We assume that the stencil is symetric and we define the local spatial
resolution and the directional resolution respectively by 
\[
\bar{h}\left(x_{0}\right)={\displaystyle \max_{i}h_{i}}
\]
 and 
\[
d\theta={\displaystyle \max_{\theta\in\left[-\pi,\pi\right]}\min_{i}\left|\theta-\theta_{i}\right|}.
\]

\textcolor{black}{First, the problem (\ref{eq:P2}) can be written
as function of the eigenvalues of the Hessian. We will then start
by discretizing $\lambda_{1}$ and $\lambda_{2}.$ Hence by a simple
substitution we obtain the scheme for (\ref{eq:P2}). }

\textcolor{black}{We recall that the smallest and the largest eigenvalues
of a symmetric matrix can be represented respectively by the Rayleigh-Ritz
formula}

\begin{equation}
\lambda_{1}\left[D^{2}u\right]\left(x\right)=\min_{\theta}\frac{d^{2}u}{d\nu_{\theta}^{2}},\qquad\lambda_{2}\left[D^{2}u\right]\left(x\right)=\max_{\theta}\frac{d^{2}u}{d\nu_{\theta}^{2}},\label{eq:Raylish-Labda}
\end{equation}
where $\nu_{\theta}=\left(\cos\theta,\sin\theta\right)$is the unit
vector in the direction of the angle $\theta.$

This formula was used in \cite{oberma08} to build a monotone scheme
in two dimension for the (MAD).

We begin by building monotone schemes for $\lambda_{1}$ and $\lambda_{2}$
on a wide stencil uniform grid. These operators are used to give schemes
for all formulations in this paper. 

We discretize the eigenvalues of the Hessian by the following formula.
\begin{equation}
\lambda_{1}^{h,d\theta}\left[D^{2}u^{g}\right]\left(x\right)=\min_{i}\frac{u^{g}\left(x+v_{i}\right)-2u^{g}\left(x\right)+u^{g}\left(x-v_{i}\right)}{\left|v_{i}\right|^{2}}\label{eq:lambda1dis}
\end{equation}
and 
\begin{equation}
\lambda_{2}^{h,d\theta}\left[D^{2}u^{g}\right]\left(x\right)=\max_{i}\frac{u^{g}\left(x+v_{i}\right)-2u^{g}\left(x\right)+u^{g}\left(x-v_{i}\right)}{\left|v_{i}\right|^{2}}.\label{eq:lambda2dis}
\end{equation}

\begin{lem}
The schemes (\ref{eq:lambda1dis}) and (\ref{eq:lambda2dis}) are
degenerate elliptic.\end{lem}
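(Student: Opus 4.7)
The strategy is a direct verification of the definition recalled in the previous subsection: I will decompose each scheme into a family of elementary single-direction operators, check that each elementary operator is non-decreasing in every one of its difference arguments, and then observe that taking the pointwise minimum or maximum over the family preserves this monotonicity.

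For each admissible direction $v_{i}$ in the stencil at the reference point $x_{0}$, the first step is to introduce the single-direction second-difference operator
\[
\phi_{i}[u](x_{0})=\frac{u(x_{0}+v_{i})-2u(x_{0})+u(x_{0}-v_{i})}{|v_{i}|^{2}},
\]
and to rewrite its numerator as $\bigl(u(x_{0}+v_{i})-u(x_{0})\bigr)+\bigl(u(x_{0}-v_{i})-u(x_{0})\bigr)$. In the grid notation $F^{0}\bigl(u_{0},\{u_{j}-u_{0}\}_{j\neq0}\bigr)$, the operator $\phi_{i}$ then depends only on the two difference arguments $u(x_{0}+v_{i})-u_{0}$ and $u(x_{0}-v_{i})-u_{0}$, each entering with the strictly positive coefficient $1/|v_{i}|^{2}$, and is independent of all remaining difference variables as well as of $u_{0}$ itself. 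Hence every $\phi_{i}$ is trivially non-decreasing in each of its variables, i.e.\ it is degenerate elliptic in the sense of the definition recalled above.

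Next I would invoke the elementary fact that pointwise $\min$ and $\max$ preserve non-decreasingness: if a family $(g_{i})_{i}$ of real-valued functions is non-decreasing in a variable $t$, then so are $\min_{i}g_{i}$ and $\max_{i}g_{i}$. Applying this coordinate by coordinate in the variables $\bigl(u_{0},\{u_{j}-u_{0}\}_{j\neq0}\bigr)$ to the family $\{\phi_{i}\}_{i}$, I conclude that
\[
\lambda_{1}^{h,d\theta}[D^{2}u](x_{0})=\min_{i}\phi_{i}[u](x_{0}),\qquad\lambda_{2}^{h,d\theta}[D^{2}u](x_{0})=\max_{i}\phi_{i}[u](x_{0})
\]
are both non-decreasing in every variable, which is precisely degenerate ellipticity for the finite-difference scheme.

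There is no real analytical obstacle in this lemma; the argument is essentially a bookkeeping verification. The only subtle point is to regroup the centered second difference so that each increment $u(x_{0}\pm v_{i})-u(x_{0})$ appears with a positive coefficient, so that the scheme is written in the normal form $F^{0}(u_{0},u_{j}-u_{0})$ to which the definition of degenerate ellipticity applies; once this is done, both the monotonicity of each $\phi_{i}$ and the preservation of monotonicity under $\min$ and $\max$ are immediate.
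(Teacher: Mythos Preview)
Your proposal is correct and follows essentially the same argument as the paper's own proof: rewrite each directional second difference as a positive linear combination of the increments $u(x_{0}\pm v_{i})-u(x_{0})$, observe that this makes each $\phi_{i}$ non-decreasing in those difference variables, and then use that pointwise $\min$ and $\max$ preserve this monotonicity. The paper states this in two sentences (citing \cite{oberma08}); your version simply spells out the same steps in more detail.
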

\begin{proof}
We follow the same as in \cite{oberma08}.

Since each discrete second derivative in the direction $v_{i}$ is
the average of the terms which have the form $u_{j}^{g}-u_{i}^{g},$
they are non-decreasing in $u_{j}^{g}-u_{i}^{g}.$ Taking a minimum
(or maximum) of non-decreasing functions furnishes a non-decreasing
function.
\end{proof}
We finally substitute (\ref{eq:lambda1dis}) and (\ref{eq:lambda2dis})
in (\ref{eq:P2}) to obtain the wide stencil finite difference scheme
of (\ref{eq:P2})
\begin{equation}
\begin{cases}
\textrm{Find a positive function }g^{i},\textrm{ such that}\\
\lambda_{1}^{h,d\theta}\left[D^{2}u^{g^{i}}\right]\times\lambda_{2}^{h,d\theta}\left[D^{2}u^{g^{i}}\right]=f^{i},
\end{cases}\label{eq:schemeP2}
\end{equation}
with 
\[
\begin{cases}
\lambda_{1}^{h,d\theta}\left[D^{2}u^{g^{i}}\right]+\lambda_{2}^{h,d\theta}\left[D^{2}u^{g^{i}}\right] & =2\sqrt{f^{i}}+g^{i}.\\
u_{|\Gamma}^{g} & =\varphi.
\end{cases}
\]
Where $f^{i}=f\left(x_{i}\right)$ and $g^{i}=g\left(x_{i}\right).$ 
\begin{lem}
\label{lem:P2 degenerate}The scheme (\ref{eq:schemeP2}) is degenerate
elliptic.\end{lem}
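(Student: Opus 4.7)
My plan is to view the scheme (\ref{eq:schemeP2}) as a composition: the ``outer'' function is the product operation together with the subtraction of $f^i$, and the ``inner'' functions are the two discrete eigenvalue operators $\lambda_{1}^{h,d\theta}$ and $\lambda_{2}^{h,d\theta}$. Writing the scheme in residual form,
\[
F^{i}[u^{g}]=\lambda_{1}^{h,d\theta}\bigl[D^{2}u^{g^{i}}\bigr](x_{i})\cdot\lambda_{2}^{h,d\theta}\bigl[D^{2}u^{g^{i}}\bigr](x_{i})-f^{i},
\]
the goal is then to verify that $F^{i}$ is non-decreasing in $u_{i}$ and in each difference $u_{j}-u_{i}$ for $j\neq i$, so as to meet the definition of a degenerate elliptic scheme.

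The first step is to invoke the preceding lemma to obtain that both $\lambda_{1}^{h,d\theta}$ and $\lambda_{2}^{h,d\theta}$ are already degenerate elliptic, hence non-decreasing in each variable $u_{j}-u_{i}$ and constant in $u_{i}$ (since each centered second difference involves only the differences). The second step is to handle the product: for smooth arguments, $\partial_{t}(a(t)b(t))=a'(t)b(t)+a(t)b'(t)$, which is non-negative as soon as $a,b\geq 0$ and $a',b'\geq 0$. So the only additional ingredient needed beyond the previous lemma is the non-negativity of the two discrete eigenvalues.

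This non-negativity is enforced by the very structure of the formulation (\ref{eq:P2}) together with the sign constraint on $g$. Indeed, the accompanying Poisson equation gives $\lambda_{1}^{h,d\theta}+\lambda_{2}^{h,d\theta}=2\sqrt{f^{i}}+g^{i}>0$, while the product equation imposes $\lambda_{1}^{h,d\theta}\cdot\lambda_{2}^{h,d\theta}=f^{i}\geq 0$; a strictly positive sum combined with a non-negative product forces both factors to be non-negative, exactly as in the continuous AM--GM discussion of the introduction. Thus on the admissible set of discrete ``convex'' grid functions targeted by the scheme, the product is monotone in each of its eigenvalue arguments.

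Combining these observations, $F^{i}$ is a composition of non-decreasing maps of non-decreasing maps, so it is itself non-decreasing in each variable; subtracting the data $f^{i}$, which does not depend on $u$, does not affect this property. The main conceptual obstacle I anticipate is the last step: the product $ab$ is not globally monotone in each factor, so one must carefully justify that we are working on the subset where both discrete eigenvalues are non-negative. I would handle this by pointing to the positivity constraint $g>0$ built into the scheme and the sign of $f$, exactly as above, so that degenerate ellipticity holds on the relevant admissible cone of grid functions.
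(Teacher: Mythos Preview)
Your approach is essentially the paper's: invoke the preceding lemma on $\lambda_{1}^{h,d\theta}$ and $\lambda_{2}^{h,d\theta}$ and then apply the composition principle from \cite{oberma08} (if $G:\mathbb{R}^{2}\to\mathbb{R}$ is nondecreasing in each argument and $F_{1},F_{2}$ are degenerate elliptic schemes, then $G(F_{1},F_{2})$ is degenerate elliptic), observing that the data $f^{i},g^{i}$ do not affect the ordering. Your explicit discussion of why the product map is nondecreasing---via the nonnegativity of the discrete eigenvalues on the admissible cone---actually goes a step beyond the paper's short proof, which simply takes the monotonicity of the outer function for granted.
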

\begin{proof}
From the properties of nondecreasing functions, obtained in \cite{oberma08},
\\ that if $G:\mathbb{R}^{2}\rightarrow\mathbb{R}$ is a nondecreasing
function, and if $F_{1}$ and $F_{2}$ are degenerate elliptic finite
difference schemes, then so is $F=G\left(F_{1},F_{2}\right).$ It
is also clear that the discretization $f^{i}=f\left(x_{i}\right)$
and $g^{i}=g\left(x_{i}\right)$ does affect the ordering properties.
We conclude that (\ref{eq:schemeP2}) is degenerate elliptic.
\end{proof}
\textcolor{black}{In the following, for simplicity, we omit the index
i when there is no ambiguity.}
\begin{defn}
We say the scheme $H^{h,d\theta}$ is consistent with the equation
(MAD) at $x_{0}$ if for every twice continuously differentiable function
$\varphi\left(x\right)$ defined in a neighborhood of $x_{0},$ $H^{h,d\theta}$$\left(\varphi\right)\left(x_{0}\right)\rightarrow H\left(\varphi\right)\left(x_{0}\right)$
as $h,d\theta\rightarrow0.$ The global scheme defined on $\varOmega$
is consistent if the limit above holds uniformly for all $x\in\varOmega.$
(The domain is assumed to be closed and bounded). \end{defn}
\begin{lem}
\label{lem:consistency1}The consistency holds for (\ref{eq:lambda1dis})
and (\ref{eq:lambda2dis}) and so for (\ref{eq:schemeP2}).\end{lem}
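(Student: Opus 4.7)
My plan is to reduce consistency of the composite scheme \eqref{eq:schemeP2} to consistency of the two one-sided eigenvalue schemes \eqref{eq:lambda1dis}--\eqref{eq:lambda2dis}, since the left-hand side of \eqref{eq:P2} is just a product of these eigenvalues and the auxiliary equation for $u^{g}$ is just a sum of them. First, I would fix a twice continuously differentiable test function $\varphi$ at an interior point $x_{0}\in\varOmega$. For each stencil vector $v_{i}=h_{i}\nu_{\theta_{i}}$, a standard centered Taylor expansion (requiring only the continuity of $D^{2}\varphi$) gives
\[
\frac{\varphi(x_{0}+v_{i})-2\varphi(x_{0})+\varphi(x_{0}-v_{i})}{|v_{i}|^{2}}=\nu_{\theta_{i}}^{T}D^{2}\varphi(x_{0})\nu_{\theta_{i}}+\varepsilon_{i}(\bar{h}),
\]
where $\varepsilon_{i}(\bar{h})\to 0$ uniformly in $i$ as $\bar{h}\to 0$, the remainder being controlled by the modulus of continuity of $D^{2}\varphi$ on a fixed compact neighbourhood of $x_{0}$.

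Next I would invoke the Rayleigh--Ritz identity \eqref{eq:Raylish-Labda}. The map $\theta\mapsto\nu_{\theta}^{T}D^{2}\varphi(x_{0})\nu_{\theta}$ is smooth on the unit circle and therefore Lipschitz with constant depending only on $\|D^{2}\varphi(x_{0})\|$; by the very definition of the directional resolution, every angle $\theta$ lies within $d\theta$ of some stencil angle $\theta_{i}$, so that
\[
\min_{i}\nu_{\theta_{i}}^{T}D^{2}\varphi(x_{0})\nu_{\theta_{i}}=\lambda_{1}(D^{2}\varphi(x_{0}))+O(d\theta),
\]
and similarly for the maximum. Combining this with the Taylor estimate above yields $\lambda_{1}^{h,d\theta}[D^{2}\varphi](x_{0})\to\lambda_{1}(D^{2}\varphi(x_{0}))$ and $\lambda_{2}^{h,d\theta}[D^{2}\varphi](x_{0})\to\lambda_{2}(D^{2}\varphi(x_{0}))$ as $(\bar{h},d\theta)\to(0,0)$, which is exactly consistency of \eqref{eq:lambda1dis} and \eqref{eq:lambda2dis}. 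Uniformity over $\overline{\varOmega}$ would follow from the fact that, on a compact domain, both the modulus of continuity of $D^{2}\varphi$ and the Lipschitz constant of the Rayleigh quotient are uniform.

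For the composite scheme \eqref{eq:schemeP2}, I would then use that $g^{i}$ is determined locally by $g^{i}=\lambda_{1}^{h,d\theta}+\lambda_{2}^{h,d\theta}-2\sqrt{f^{i}}$, so that after substitution the discrete residual reduces to $\lambda_{1}^{h,d\theta}\cdot\lambda_{2}^{h,d\theta}-f^{i}$. As the product of two uniformly convergent, locally bounded quantities, this tends to $\det(D^{2}\varphi)(x_{0})-f(x_{0})$ uniformly on $\overline{\varOmega}$, giving the claimed consistency. The main subtlety I foresee is the coupling between the spatial parameter $\bar{h}$ and the angular parameter $d\theta$ on a wide stencil: refining $d\theta$ typically requires enlarging the stencil, which can enlarge $\bar{h}$, and care is needed to ensure that both the Taylor remainder and the angular remainder vanish in the joint limit. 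Granting the standing stencil assumptions of \cite{oberma08}, this decoupling is built into the grid construction and the argument closes.
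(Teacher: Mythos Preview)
Your argument is correct and follows the same overall architecture as the paper: a centered Taylor expansion handles the spatial error, a separate estimate handles the angular sampling of the Rayleigh quotient, and consistency of the composite scheme \eqref{eq:schemeP2} follows by continuity of the product. The differences are in the rates. For the angular part you use only Lipschitz continuity of $\theta\mapsto\nu_{\theta}^{T}D^{2}\varphi(x_{0})\nu_{\theta}$ and obtain an $O(d\theta)$ error, whereas the paper invokes Lemma~3 of \cite{oberma08} (diagonalising $D^{2}\varphi(x_{0})$ and using that the Rayleigh quotient is quadratic near its extremum) to get the sharper bound $(\lambda_{2}-\lambda_{1})\,O(d\theta^{2})$. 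Conversely, for the spatial part you are more careful: you control the Taylor remainder by the modulus of continuity of $D^{2}\varphi$, which is all that a $C^{2}$ test function guarantees, while the paper writes $O(h_{i}^{2})$ without comment. Both sets of estimates suffice for consistency in the sense of the definition; yours is self-contained and matches the stated regularity hypothesis, the paper's gives the better convergence rate under additional smoothness. Your explicit treatment of the composite scheme and of the $\bar h$--$d\theta$ coupling is a point the paper leaves implicit.
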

\begin{proof}
Let $x_{0}$ be a reference point with neighbors $x_{1},...,x_{N},$
and direction vectors $v_{i}=x_{i}-x_{0,}$ for $i=1,...,N,$ arranged
symmetrically, if $v_{i}$ is a direction vector, then so is $-v_{i}.$
By Taylor series one has 
\[
\frac{u^{g}\left(x_{0}+v_{i}\right)-2u^{g}\left(x_{0}\right)+u^{g}\left(x_{0}-v_{i}\right)}{\left|v_{i}\right|^{2}}=\frac{d^{2}u^{g}}{dv_{i}^{2}}+O\left(h_{i}^{2}\right).
\]
Let $M$ given symetric $2\times2$ matrix, that we can take it diagonal.
Set $v_{\theta}$ a unit vector. It follows from \cite{oberma08}
(Lemma 3) that 
\[
\min_{\theta\in\left\{ \theta_{1},...,\theta_{N}\right\} }v_{\theta}^{T}Mv_{\theta}=\lambda_{1}+\left(\lambda_{2}-\lambda_{1}\right)O\left(\theta^{2}\right).
\]
 Which implies that
\[
\lambda_{1}\left(\varphi\right)\left(x_{0}\right)-\lambda_{1}^{h,d\theta}\left(\varphi\right)\left(x_{0}\right)=O\left(\bar{h}^{2}+\left(\lambda_{2}-\lambda_{1}\right)d\theta^{2}\right)
\]
and thus consistency holds for (\ref{eq:lambda1dis}). Similar argument
gives consistency for (\ref{eq:lambda2dis}) and so for (\ref{eq:schemeP2}).\end{proof}
\begin{thm}
Suppose that unique viscosity solutions exist for the equation \textcolor{black}{(\ref{eq:P2})}
Then the finite difference scheme given by\textcolor{black}{{} (\ref{eq:schemeP2})}
converges uniformly on compacts subsets of $\varOmega$ to the unique
viscosity solution of the equation.\end{thm}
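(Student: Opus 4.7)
The plan is to apply the Barles--Souganidis convergence theorem recalled earlier in the paper, which requires three ingredients: monotonicity, consistency, and stability of the scheme (\ref{eq:schemeP2}). Given the hypothesis that unique viscosity solutions exist for (\ref{eq:P2}), verifying these three properties delivers the conclusion with no additional work.

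First, monotonicity is already in hand: Lemma~\ref{lem:P2 degenerate} asserts that the scheme (\ref{eq:schemeP2}) is degenerate elliptic, and by Theorem~\ref{thm:fixpoint} degenerate elliptic schemes are monotone and non-expansive in the uniform norm. I would simply cite these two results. Second, consistency is supplied by Lemma~\ref{lem:consistency1}, which produces the estimates
\[
\lambda_{j}(\varphi)(x_{0}) - \lambda_{j}^{h,d\theta}(\varphi)(x_{0}) = O\!\left(\bar{h}^{2} + (\lambda_{2}-\lambda_{1})\,d\theta^{2}\right), \quad j=1,2,
\]
for every $C^{2}$ test function $\varphi$. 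Substituting these expansions into the product and sum operations appearing in (\ref{eq:schemeP2}) and invoking their continuity propagates the $O(\bar{h}^{2}+d\theta^{2})$ consistency to the full scheme, uniformly on the closed bounded domain $\bar{\varOmega}$.

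For stability, I would appeal to the earlier theorem stating that a proper, locally Lipschitz continuous degenerate elliptic scheme has a unique $l^{\infty}$-stable solution. Local Lipschitz continuity is immediate: both $\lambda_{1}^{h,d\theta}$ and $\lambda_{2}^{h,d\theta}$ are finite min/max of centered finite differences (hence Lipschitz), and the product of Lipschitz functions is Lipschitz on bounded sets. Properness would come from the coupling with the Poisson problem $(\mathcal{P}^{\tilde{g}})$: after eliminating the auxiliary $g$ through the sum constraint $\lambda_{1}^{h,d\theta}+\lambda_{2}^{h,d\theta}=2\sqrt{f^{i}}+g^{i}$, the discrete Laplacian injects a strictly monotone dependence on $u_{i}^{g}$ into the overall scheme.

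The main obstacle I expect is precisely this stability step, because the scheme is only implicitly defined: the unknown $u^{g}$ is coupled to a positive auxiliary $g$ through the Poisson relation. One must verify that the discretization preserves strict positivity of the $g^{i}$ and that the boundary data propagate uniformly in $h$ and $d\theta$ to $l^{\infty}$ bounds on the discrete solutions, so that the hypotheses of the stability theorem are genuinely met. Once stability is secured, the three ingredients combine via Barles--Souganidis to yield uniform convergence on compact subsets of $\varOmega$ to the unique viscosity solution of (\ref{eq:P2}).
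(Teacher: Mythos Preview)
Your proposal is correct and follows essentially the same route as the paper: invoke the Barles--Souganidis theorem by citing Lemma~\ref{lem:P2 degenerate} for monotonicity and Lemma~\ref{lem:consistency1} for consistency. In fact you are more careful than the paper, which verifies only consistency and monotonicity and leaves stability implicit; your additional discussion of stability via the proper, locally Lipschitz degenerate-elliptic framework is a reasonable supplement, though the paper does not carry it out.
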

\begin{proof}
\textcolor{black}{We need to verify consistency and monotonicity.
Consistency follows from Lemma \ref{lem:consistency1} and monotonicity
follows from Lemma \ref{lem:P2 degenerate}.}
\end{proof}
Finally, the scheme yields a fully nonlinear equation defined on grid
functions. We perform the iteration (\ref{eq:FixPoint}) and by Theorem
\ref{thm:fixpoint} will converge to a fixed point which is a solution
of the equation. This approach is used in \cite{oberma08}.

\section{TWO METHODS OF FIXED POINT }

\subsection{The first method (Method B)}

Notice that from Lemma \ref{lem:Lemma equiva} if $u$ is a solution
of (\ref{eq:MAD}) $u\left(x,y\right)=u^{\tilde{g}}\left(x,y\right)$
it follows that $\det\left(D^{2}u\right)=\det\left(D^{2}u^{\tilde{g}}\right),$
where $u^{\tilde{g}}$ is the solution of (\ref{eq:Pgdilde}) for
$\tilde{g}\in L^{2}.$ 

By writing 
\[
\triangle u^{\tilde{g}}=2\sqrt{f}+\tilde{g}=\sqrt{\left(\Delta u^{\tilde{g}}\right)^{2}+2\left(f-\det\left(D^{2}u^{\tilde{g}}\right)\right)}
\]
and expanding $\left(\Delta u^{\tilde{g}}\right)^{2}=\left(u_{xx}^{\tilde{g}}\right)^{2}+\left(u_{yy}^{\tilde{g}}\right)^{2}+2u_{xx}^{\tilde{g}}u_{yy}^{\tilde{g}}$
we have 
\[
\triangle u^{\tilde{g}}=\sqrt{\left(u_{xx}^{\tilde{g}}\right)^{2}+\left(u_{yy}^{\tilde{g}}\right)^{2}+2\left(u_{xy}^{\tilde{g}}\right)^{2}+2f}=2\sqrt{f}+\tilde{g}
\]
Let us define the operator $Q\textrm{ }:\textrm{ }L^{2}\left(\varOmega\right)\rightarrow L^{2}\left(\varOmega\right)$
for $\varOmega\subset\mathbb{R}^{2}$ by 
\[
Q\left(g\right):=\sqrt{\left(u_{xx}^{g}\right)^{2}+\left(u_{yy}^{g}\right)^{2}+2\left(u_{xy}^{g}\right)^{2}+2f}-2\sqrt{f},
\]
with $u^{g}$ solution of $\left(\mathcal{P}^{g}\right).$ So, one
has
\begin{lem}
$\tilde{g}$ is a fixed point of $Q.$ \end{lem}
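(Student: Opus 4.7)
The plan is to start from the defining relation for $\tilde g$ and massage it into the exact form produced by $Q$. By Lemma \ref{lem:Lemma equiva}, the unique convex solution $u$ of (MAD) coincides with $u^{\tilde g}$, so we simultaneously have the Poisson identity
\[
\Delta u^{\tilde g}=2\sqrt f+\tilde g
\]
and the Monge--Amp\`ere identity
\[
\det(D^{2}u^{\tilde g})=u^{\tilde g}_{xx}u^{\tilde g}_{yy}-(u^{\tilde g}_{xy})^{2}=f.
\]
These two pieces of information are the only inputs I will need.

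The central computation is to square the Poisson identity and eliminate the mixed product $u^{\tilde g}_{xx}u^{\tilde g}_{yy}$ using the determinant identity. Expanding gives
\[
(\Delta u^{\tilde g})^{2}=(u^{\tilde g}_{xx})^{2}+(u^{\tilde g}_{yy})^{2}+2u^{\tilde g}_{xx}u^{\tilde g}_{yy}
=(u^{\tilde g}_{xx})^{2}+(u^{\tilde g}_{yy})^{2}+2(u^{\tilde g}_{xy})^{2}+2f,
\]
which is exactly the radicand appearing in the definition of $Q$. I would then take the square root, observing that $\Delta u^{\tilde g}\ge 0$ because $u^{\tilde g}$ is convex (indeed, this non-negativity is also built in by the construction $\Delta u^{\tilde g}=2\sqrt f+\tilde g$ with $\tilde g\ge 0$, as recorded in the Remark preceding the lemma). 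Hence
\[
\Delta u^{\tilde g}=\sqrt{(u^{\tilde g}_{xx})^{2}+(u^{\tilde g}_{yy})^{2}+2(u^{\tilde g}_{xy})^{2}+2f}.
\]
Subtracting $2\sqrt f$ from both sides and using $\Delta u^{\tilde g}-2\sqrt f=\tilde g$ on the left gives exactly $\tilde g=Q(\tilde g)$.

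The only delicate point is the sign of the square root, which is why I would explicitly invoke convexity of $u^{\tilde g}$ (or equivalently the positivity of $\tilde g$ asserted in Lemma \ref{lem:Lemma equiva}) before writing the positive root; with that in hand the rest is a one-line algebraic manipulation. The regularity hypothesis $u\in H^{2}$ from Lemma \ref{lem:Lemma equiva} is what allows us to manipulate the pointwise (a.e.) identities for the second derivatives, so no further analytic input is needed.
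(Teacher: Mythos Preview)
Your proof is correct and follows essentially the same route as the paper: the paper simply writes ``It follows from above expansions,'' referring to the computation (displayed just before the lemma) that $(\Delta u^{\tilde g})^{2}=(u^{\tilde g}_{xx})^{2}+(u^{\tilde g}_{yy})^{2}+2(u^{\tilde g}_{xy})^{2}+2f$ when $\det(D^{2}u^{\tilde g})=f$. Your version is slightly more explicit in justifying the choice of the positive square root via convexity, which is a reasonable clarification.
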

\begin{proof}
It follows from above expansions. 
\end{proof}

\subsubsection{The scheme}

We consider the following scheme 
\[
g_{n+1}=Q\left(g_{n}\right)=\sqrt{\left(u_{xx}^{g_{n}}\right)^{2}+\left(u_{yy}^{g_{n}}\right)^{2}+2\left(u_{xy}^{g_{n}}\right)^{2}+2f}-2\sqrt{f}.
\]
With initial value $g_{0}>0$ close to zero and $u^{g_{0}}$ is the
solution of $\left(P^{g_{0}}\right).$ 
\begin{rem}
The advantage of this method by comparing it to that in \cite{oberma11}
and \cite{Ben Amou} is that it guarantees, at least, at each iteration
that $tr\left(D^{2}u^{g_{n}}\left(x\right)\right)>0$, which is necessary
to check the convexity.

Although this method turns out to be simple to implement is well suited
in the case where $u^{g}$ is in $H^{2}\left(\varOmega\right).$ If
not, the method may not converge.
\end{rem}

\subsubsection{Algorithm}
\begin{itemize}
\item $g_{0}\geq0$\textcolor{black}{{} (close to 0), solve }$\left(P^{g_{0}}\right),$
($\left(u^{g}\right)^{0}=u^{g_{0}}$\textcolor{black}{being known), }
\item For $n\geq0,$ compute $g^{n+1}$ and $\left(u^{g}\right)^{n+1}$
as follows 
\[
g^{n+1}=Q\left(g^{n}\right),
\]
\[
\left(u^{g}\right)^{n+1}\textrm{ solution of }\left(P^{g^{n+1}}\right).
\]

\end{itemize}
Where, the method involves simply discretising the second derivatives
using standard central differences on a uniform Cartesian grid, as
a result
\begin{eqnarray*}
D_{xx}^{2}u_{ij} & = & \frac{1}{h^{2}}\left(u_{i+1,,j}+u_{i-1,j}-,2u_{i,,j}\right),\\
D_{yy}^{2}u_{ij} & = & \frac{1}{h^{2}}\left(u_{i,,j+1}+u_{i,,j-1}-2u_{i,,j}\right),\\
D_{xx}^{2}u_{ij} & = & \frac{1}{4h^{2}}\left(u_{i+1,,j+1}+u_{i-1,,j-1}-u_{i-1,,j+1}-u_{i+1,,j-1}\right).
\end{eqnarray*}

\subsection{The second method (Method C)}

In the same setting we define the next operator.
\begin{defn}
Let $\varOmega$ a bounded domain in $\mathbb{R}^{2}.$ Define the
operator $F:L^{2}\left(\varOmega\right)\rightarrow L^{2}\left(\varOmega\right),\textrm{ }$
by 
\end{defn}
\begin{equation}
F\left(g\right)=\sqrt{\left|\det\left[D^{2}u^{g}\right]-f\right|}+g,\label{eq:Poinfixe gdilde}
\end{equation}
where $u^{g}$ is a solution of 
\begin{equation}
\left(\mathcal{P}^{g}\right)\left\{ \begin{array}{c}
\Delta u=2\sqrt{f}+g,\\
u_{|\Gamma}=\varphi.
\end{array}\right.
\end{equation}

For $g\in L^{2}\left(\varOmega\right)$, the operator $F$ is well
defined and it is easy to verify that
\begin{lem}
$\widetilde{g}$ is a fixed point of the operator $F.$ \end{lem}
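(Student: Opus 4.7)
The plan is to trace $\widetilde{g}$ back to its definition in the remark of the introduction and then use the equivalence established in Lemma \ref{lem:Lemma equiva}. Recall that $\widetilde{g} := \Delta u - 2\sqrt{f}$ where $u$ is the (assumed $H^2$) solution of (MAD), so in particular $\det(D^2 u) = f$ and $\Delta u = 2\sqrt{f} + \widetilde{g}$.

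First I would invoke Lemma \ref{lem:Lemma equiva}, which gives $u = u^{\widetilde{g}}$, i.e.\ $u$ is exactly the solution of the Poisson problem $(\mathcal{P}^{\widetilde{g}})$ whose source is determined by $\widetilde{g}$. Consequently $D^{2}u^{\widetilde{g}} = D^{2}u$ pointwise (a.e.), and therefore
\[
\det\!\bigl[D^{2}u^{\widetilde{g}}\bigr] - f \;=\; \det\!\bigl[D^{2}u\bigr] - f \;=\; 0.
\]

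Plugging this into the definition \eqref{eq:Poinfixe gdilde} of $F$ gives
\[
F(\widetilde{g}) \;=\; \sqrt{\bigl|\det[D^{2}u^{\widetilde{g}}]-f\bigr|} \;+\; \widetilde{g} \;=\; 0 + \widetilde{g} \;=\; \widetilde{g},
\]
which is the desired fixed point identity.

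No real obstacle is expected; the only subtlety is regularity, namely making sure that $\det[D^{2}u^{\widetilde{g}}]$ makes sense pointwise (or almost everywhere) so that the composition $F(\widetilde{g})$ is a well-defined element of $L^{2}(\Omega)$. This is covered by the standing hypothesis $u\in H^{2}$ from Lemma \ref{lem:Lemma equiva}, under which the second-order derivatives of $u^{\widetilde{g}}$ lie in $L^{2}$ and the argument above is rigorous.
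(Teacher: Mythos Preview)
Your argument is correct and follows essentially the same route as the paper: invoke Lemma \ref{lem:Lemma equiva} to identify $u=u^{\widetilde{g}}$, conclude $\det[D^{2}u^{\widetilde{g}}]=f$, and read off $F(\widetilde{g})=\widetilde{g}$. The only difference is that you add a remark on the $H^{2}$ regularity needed for well-definedness, which the paper omits.
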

\begin{proof}
Let $u$ a smooth solution of (\ref{eq:MAD}). It follows from Lemma
\ref{lem:Lemma equiva} that $u=u^{\widetilde{g}}.$ Which implies
that $\det\left[D^{2}u^{\widetilde{g}}\right]=\det\left[D^{2}u\right]=f$
and therefore, $F\left(\widetilde{g}\right)=\widetilde{g}.$
\end{proof}

\subsection{The scheme}

We define the following scheme 
\[
g_{n+1}=F\left(g_{n}\right)=\sqrt{\left|\det\left[D^{2}u^{g_{n}}\right]-f\right|}+g_{n}.
\]
With initial value $g_{0}>0$ close to zero and $u^{g_{0}}$ is the
solution of $\left(P^{g_{0}}\right).$ 
\begin{rem}
The method is advantageous, it simply involves evaluating derivatives
and solving the Poisson equation that preserves the convexity constraint. 
\end{rem}

\subsubsection{Algorithm }
\begin{itemize}
\item $g_{0}\geq0$\textcolor{black}{{} (close to 0), solve }$\left(P^{g_{0}}\right),$
($\left(u^{g}\right)^{0}=u^{g_{0}}$\textcolor{black}{being known), }
\item For $n\geq0,$ compute $g^{n+1}$ and $\left(u^{g}\right)^{n+1}$
as follows 
\[
g^{n+1}=\alpha\sqrt{\left|\det\left[D^{2}u^{g^{n}}\right]-f\right|}+g^{n},
\]
with $0<\alpha<1.$
\[
\left(u^{g}\right)^{n+1}\textrm{ solution of }\left(P^{g^{n+1}}\right).
\]

\end{itemize}
As in the above method, second derivatives are descretized using standard
central differences on a uniform Cartesian grid.

\section{Numerical experiments}

The three methods are tested on three different examples (smooth or
singular solutions). The discretization is done in the wide stencil
Finite Difference method with 17- points (see Figure \ref{Stencil 17}).
The number of noeuds meshing is equal to $N*N$ with $N=31,45,63,89,127$,
the step of meshing $h=L/N,$ with  $ L$ is the length of the side
of the rectangular domain $\Omega$. The results obtained are compared
with those in \cite{oberma11}. \\ 

In the \textbf{first example} we study the regular solution given
by : 

\[
u\left(x,y\right)=\exp\left(\frac{\left(x^{2}+y^{2}\right)}{2}\right)\textrm{ with }f\left(x,y\right)=\left(x^{2}+y^{2}+1\right)\exp\left(x^{2}+y^{2}\right).
\]

The Table \ref{table1} summarizes the obtained results for different
meshing. 

\begin{table}
\begin{tabular}{|c|c|c|c|c|}
\hline 
N & Results in \cite{oberma11} & Method A  & Method B & Method C\tabularnewline
\hline 
\hline 
31 & $2.44\times10^{-4}$ & $2.965\times10^{-4}$ & $4.226\times10^{-4}$ & $18\times10^{-4}$\tabularnewline
\hline 
45 & $1.52\times10^{-4}$ & $3.052\times10^{-4}$ & $2.202\times10^{-4}$ & $18\times10^{-4}$\tabularnewline
\hline 
63 & $9.06\times10^{-5}$ & $2.801\times10^{-4}$ & $1.190\times10^{-4}$ & $17\times10^{-4}$\tabularnewline
\hline 
89 & $5.32\times10^{-5}$ & $8.035\times10^{-4}$ & $6.494\times10^{-5}$ & $17\times10^{-4}$\tabularnewline
\hline 
127 & $3.06\times10^{-5}$ & $2.015\times10^{-4}$ & $3.888\times10^{-5}$ & $17\times10^{-4}$\tabularnewline
\hline 
\end{tabular}\caption{Errors $\left\Vert u-u^{N}\right\Vert _{\infty}$ for the exact solution
of the first example on an $N\times N$ grid. We include results from
the wide stencil methods of \cite{oberma11} on seventeen point stencils. }\label{table1}

\end{table}

In Figure \ref{Figure1} we show the surface plot of the solution
and the total CPU time versus $N$ for the methods A, B and C. 
\begin{figure}

\includegraphics[width=6cm,height=5cm]{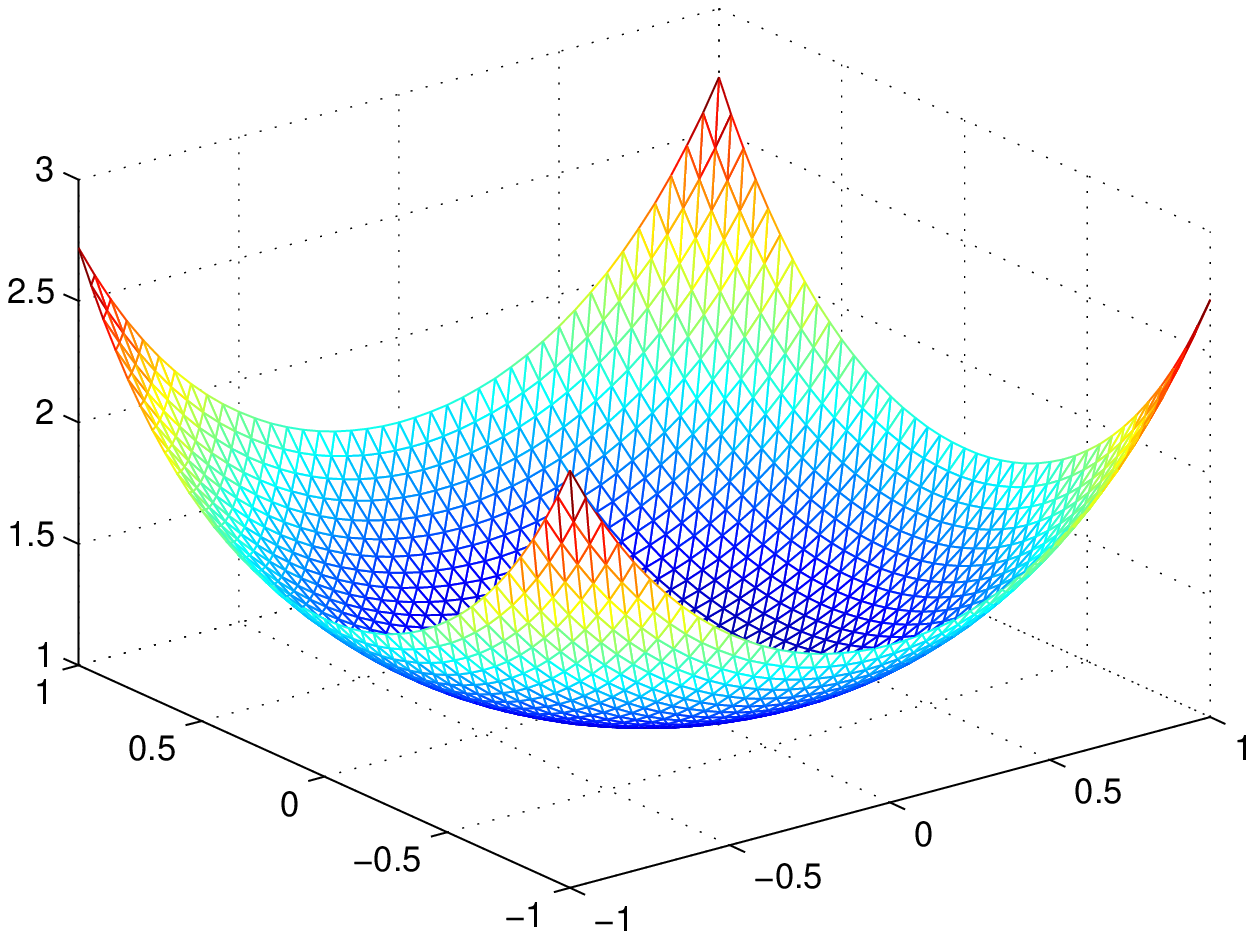}
\label{Figure1}\includegraphics[width=6cm,height=5cm]{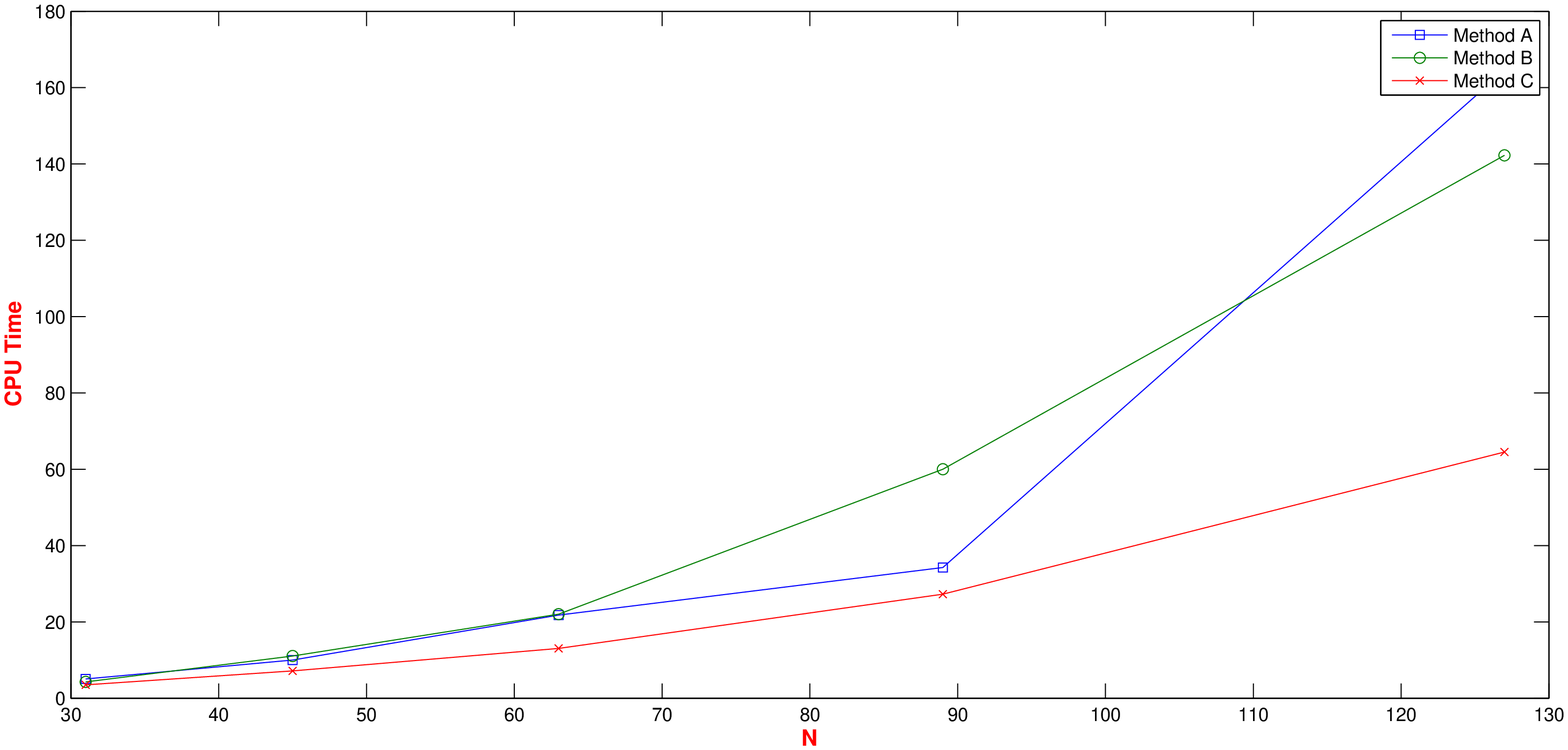}\caption{Results for example 1 on an $N\times N$ grid and total CPU time versus
$N$ for the methods A, B and C.}

\end{figure}

\begin{table}
\begin{tabular}{|c|c|c|c|c|}
\hline 
N & Results in \cite{oberma11} & Method A & Method B & Method C\tabularnewline
\hline 
\hline 
31 & $1.22\times10^{-3}$ & $5.806\times10^{-4}$ & $6.853\times10^{-4}$ & $8.794\times10^{-4}$\tabularnewline
\hline 
45 & $5.9\times10^{-4}$ & $4.92\times10^{-4}$ & $6.719\times10^{-4}$ & $8.727\times10^{-4}$\tabularnewline
\hline 
63 & $4.2\times10^{-4}$ & $4.914\times10^{-4}$ & $2.733\times10^{-4}$ & $8.601\times10^{-4}$\tabularnewline
\hline 
89 & $2.6\times10^{-4}$ & $4.085\times10^{-4}$ & $2.09\times10^{-5}$ & $8.173\times10^{-4}$\tabularnewline
\hline 
127 & $2.0\times10^{-4}$ & $4.056\times10^{-4}$ & $1.08\times10^{-5}$ & $8.164\times10^{-4}$\tabularnewline
\hline 
\end{tabular}\caption{Errors $\left\Vert u-u^{N}\right\Vert _{\infty}$ for the exact solution
of the second example on an $N\times N$ grid. We include results
from the wide stencil methods of \cite{oberma11} on seventeen point
stencils. }\label{table2}
\end{table}

As a \textbf{second example}, which is $C^{1}$ , we take the one
considered in \cite{oberma11} which is given by \[ u(x,y)= \frac{1}{2}((\sqrt{(x-0.5)^2+(y-0.5)^2}-0.2)^{+})^2 \rm \ \ with \rm \ \ f(x,y)=(1-\dfrac{0.2}{ \sqrt{(x-0.5)^{2}+(y-0.5)^{2}}})^{+}. \]

The results are in Table \ref{table2}. 
\begin{figure}

\includegraphics[width=6cm,height=5cm]{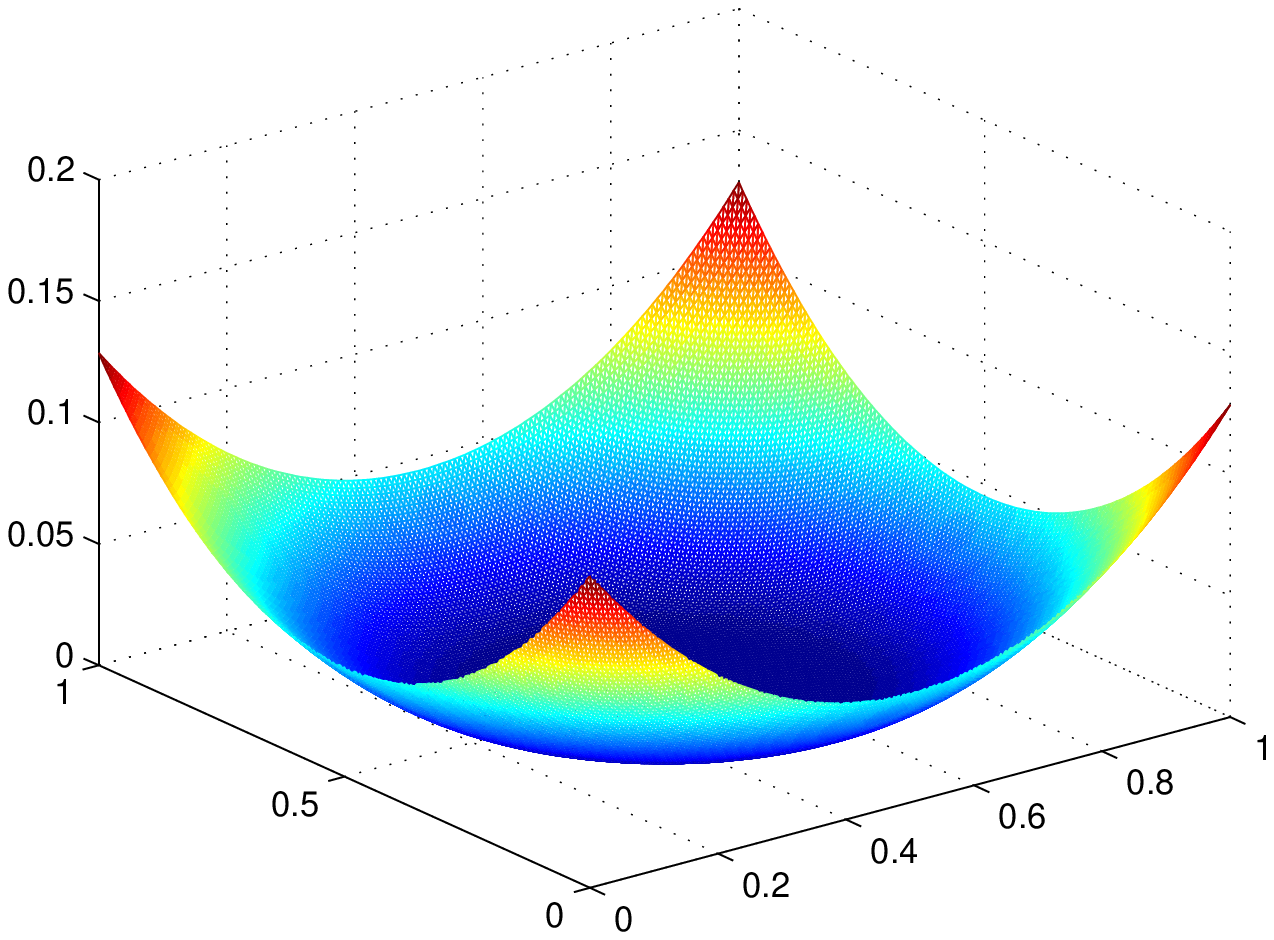}\label{figure2}\includegraphics[width=6cm,height=5cm]{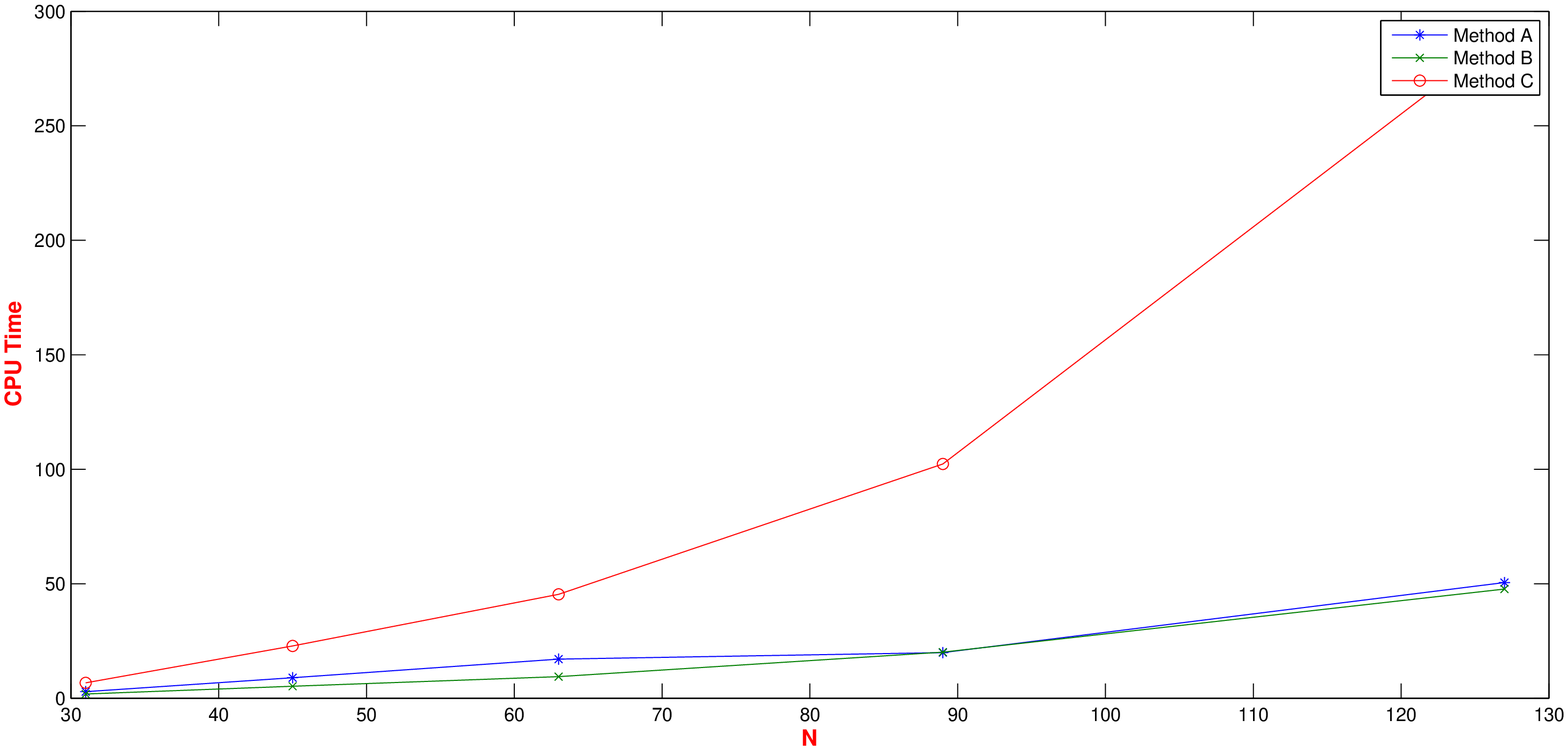}\caption{Results for example 2 on an $N\times N$ grid and total CPU time versus
$N$ for the methods A, B and C.}

\end{figure}
\begin{table}
	\begin{tabular}{|c|c|c|c|c|}
		\hline 
		N & Results in \cite{oberma11} & Method A & Method B & Method C\tabularnewline
		\hline 
		\hline 
		31 & $1.74\times10^{-3}$ & $1.7\times10^{-3}$ & $5.1\times10^{-3}$ & $5.7\times10^{-3}$\tabularnewline
		\hline 
		45 & $9.8\times10^{-4}$ & $1.5\times10^{-3}$ & $4.8\times10^{-3}$ & $5.5\times10^{-3}$\tabularnewline
		\hline 
		63 & $5.9\times10^{-4}$ & $8.9\times10^{-4}$ & $3.9\times10^{-3}$ & $5.5\times10^{-3}$\tabularnewline
		\hline 
		89 & $3.5\times10^{-4}$ & $8.9\times10^{-4}$ & $3.1\times10^{-3}$ & $5.5\times10^{-3}$\tabularnewline
		\hline 
		127 & $2.0\times10^{-4}$ & $8.2\times10^{-4}$ & $2.4\times10^{-3}$ & $5.5\times10^{-3}$\tabularnewline
		\hline 
	\end{tabular}\caption{Errors $\left\Vert u-u^{N}\right\Vert _{\infty}$ for the exact solution
		of the third example on an $N\times N$ grid. We include results from
		the wide stencil methods of \cite{oberma11} on seventeen point stencils. }\label{table3}
\end{table}

Finally, we consider a\textbf{ third example} which is singular at
the bord of the domain $\varOmega=\left[0,1\right]\times\left[0.1\right],$defined
by 

\[ u(x,y)=-\sqrt{(2-x^{2}-y^{2})} \rm \ \ where \ \ f(x,y)=\dfrac{2}{(2-x^{2}-y^{2})^{2}}. \].

The results are illustrated in Table \ref{table3} and

\begin{figure}
\includegraphics[width=6cm,height=5cm]{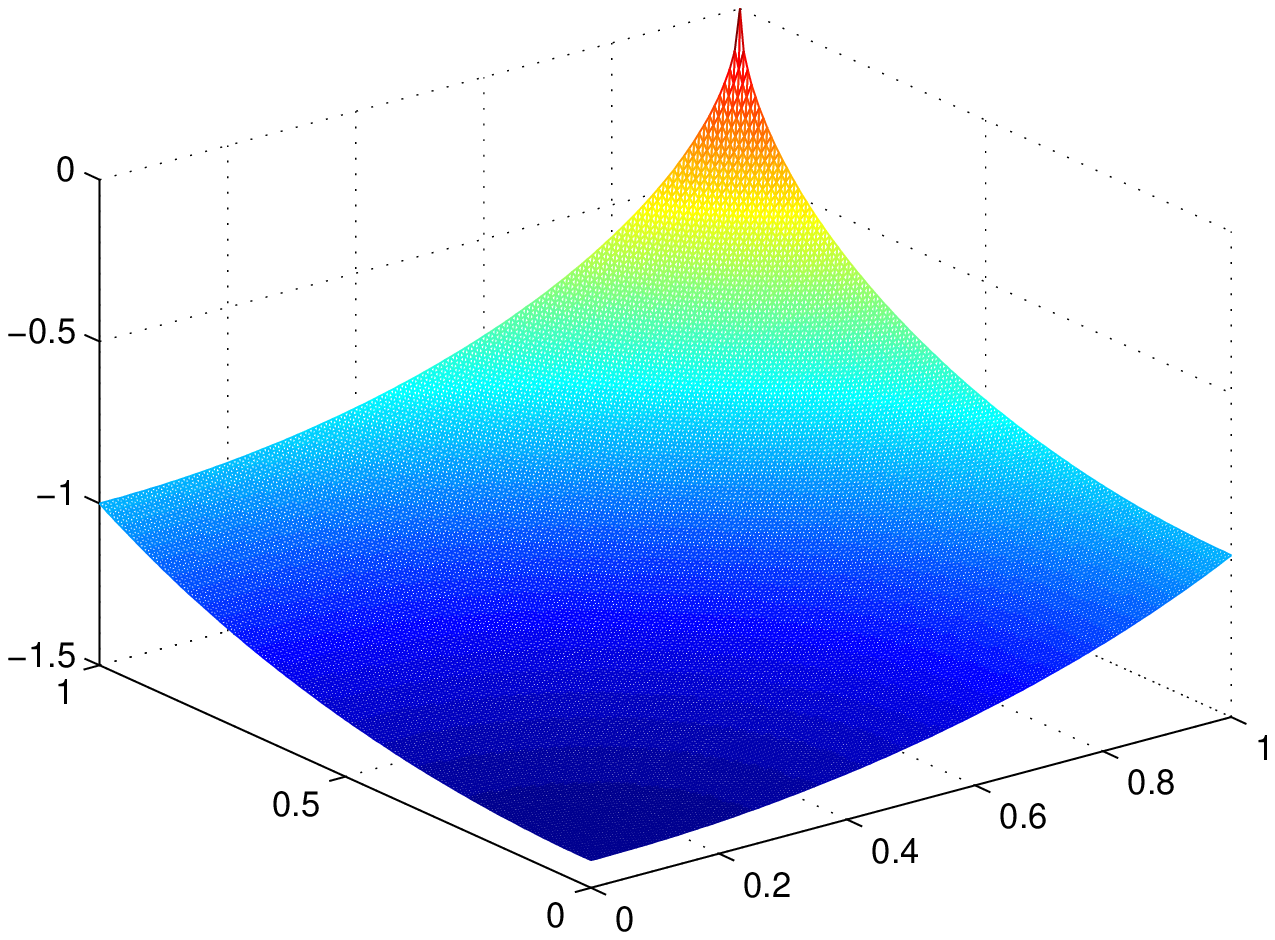}\label{figure3}\includegraphics[width=6cm,height=5cm]{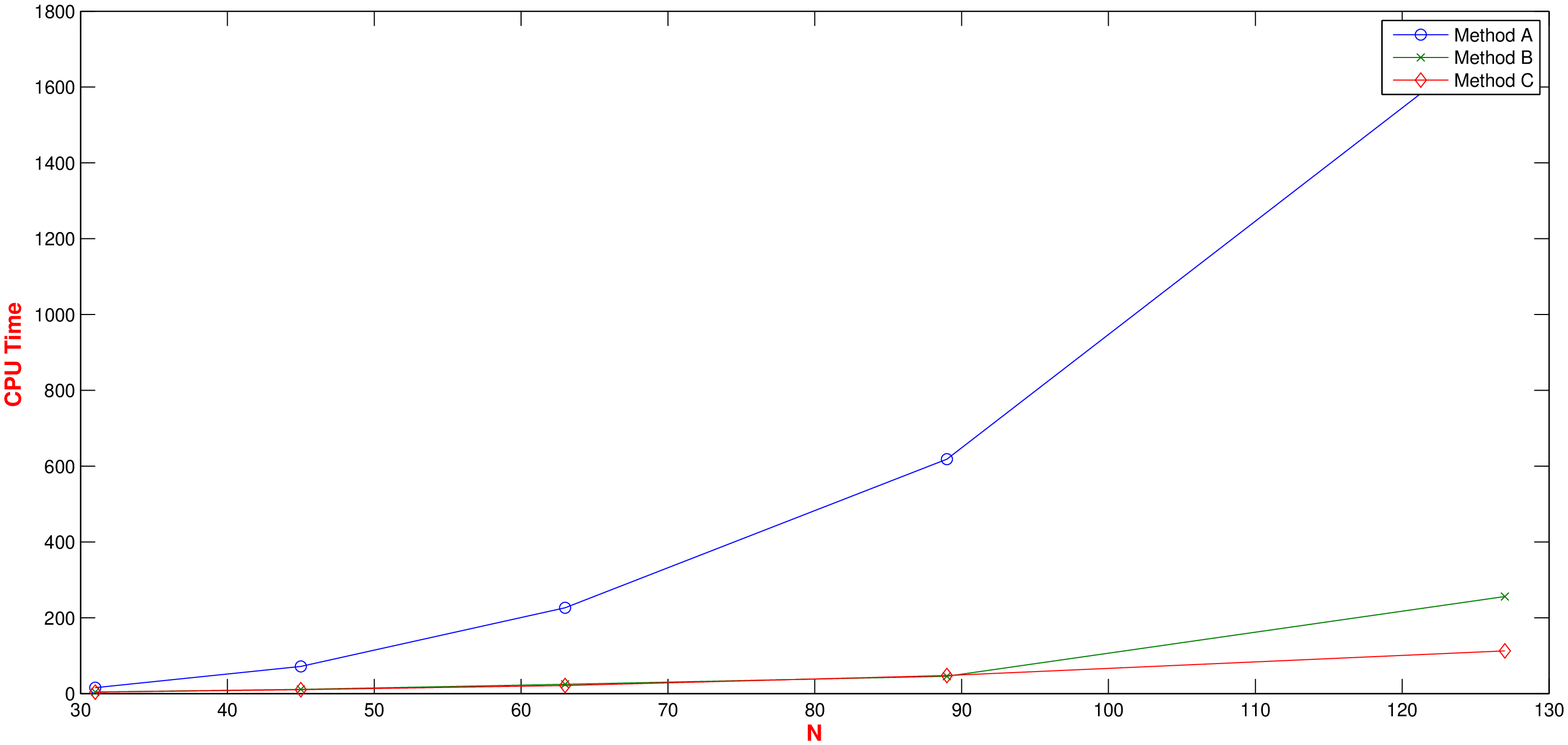}\caption{Results for example 2 on an $N\times N$ grid and total CPU time versus
$N$ for the methods A, B and C.}
\end{figure}

\newpage

\section*{Acknowledgments}

We are indebted to Pr. Pierre-Emmanuelle Jabin for his relevant remarks
and his impressive comments which have greatly improved this work.

.

\begin{thebibliography}{10}
\bibitem{Souganidis}Guy Barles and Panagiotis E. Souganidis. Convergence
of approximation schemes for fully nonlinear second order equations.
Asymptotic Anal., 4(3):271\textendash 283, 1991.

\bibitem{Ben Amou}Jean-David Benamou, Brittany D. Froese, and Adam
M. Oberman. Two numerical methods for the elliptic Monge-Ampère equation.
ESAIM: Math. Model. Numer. Anal., 44(4), 2010.

\bibitem{FBB}Fethi Ben Belgacem, Optimization approach for the Monge-Ampère
equation, Acta Mathematica Scientia, Vol. 38, Issu 4 (2018), 1285-1295.

\bibitem{Budd}C. J. Budd and J. F. Williams. Moving mesh generation
using the parabolic Monge-Ampère equation. SIAM J. Sci. Comput., 31(5):3438\textendash 3465,
2009.

\bibitem{calabi}Eugenio Calabi, Complete affine hyperspheres. I,
Symposia Mathematica, Vol. X (Convegno di Geometria Differenziale,
INDAM, Rome, 1971), Academic Press, London, 1972, pp. 19\textendash 38.
MR0365607 (51 \#1859)

\bibitem{cheng yau}Shiu Yuen Cheng and Shing-Tung Yau, Complete affine
hypersurfaces. I. The com- pleteness of affine metrics, Comm. Pure
Appl. Math. 39 (1986), no. 6, 839\textendash 866, DOI 10.1002/cpa.3160390606.
MR859275 (87k:53127) 

\bibitem{Guti=0000E8rez}Cristian E. Gutiérrez. The Monge-Ampère equation.
Progress in Nonlinear Differential Equations and their Applications,
44. Birkhäuser Boston Inc., Boston, MA, 2001.

\bibitem{Delzano1}\textcolor{black}{G. L. Delzanno, L. Chacón, J.
M. Finn, Y. Chung, and G. Lapenta. An optimal robust equidistribution
method for two-dimensional grid adaptation based on Monge-Kantorovich
optimization. J. Comput. Phys., 227(23):9841\textendash 9864, 2008}

\bibitem{Delzano2}\textcolor{black}{{} J. M. Finn, G. L. Delzanno,
and L. Chacón. Grid generation and adaptation by Monge- Kantorovich
optimization in two and three dimensions. In Proceedings of the 17th
Interna- tional Meshing Roundtable, pages 551\textendash 568, 2008}

\bibitem{Glowinski1}\textcolor{black}{E. J. Dean and R. Glowinski.
An augmented Lagrangian approach to the numerical solution of the
Dirichlet problem for the elliptic Monge-Ampère equation in two dimensions.
Electron. Trans. Numer. Anal., 22:71\textendash 96 (electronic), 2006.}

\bibitem{glowinski2}\textcolor{black}{E. J. Dean and Roland Glowinski.
On the numerical solution of the elliptic Monge- Ampère equation in
dimension two: a least-squares approach. In Partial differential equations,
volume 16 of Comput. Methods Appl. Sci., pages 43\textendash 63. Springer,
Dordrecht, 2008.}

\bibitem{Figali}G. De Philippis and A. Figalli, The Monge-Ampère
equation and its link to optimal trans- portation, Bull. Amer. Math.
Soc. (N.S.) 51 (2014), no. 4, 527\textendash 580. MR3237759

\bibitem{Oliker}T. Glimm and V. Oliker. Optical design of single
reflector systems and the Monge-Kantorovich mass transfer problem.
J. Math. Sci. (N. Y.), 117(3):4096\textendash 4108, 2003. Nonlinear
problems and function theory.

\bibitem{Gr=0000E9goire}Grégoire Loeper and Francesca Rapetti. Numerical
solution of the Monge-Ampére equation by a Newton\textquoteright s
algorithm. C. R. Math. Acad. Sci. Paris, 340(4):319\textendash 324,
2005.

\bibitem{Rehman}T. ur Rehman, E. Haber, G. Pryor, J. Melonakos, and
A. Tannenbaum. 3D nonrigid regis- tration via optimal mass transport
on the GPU. Med Image Anal, 13(6):931\textendash 40, 12 2009.

\bibitem{Haker1}\textcolor{black}{{} Steven Haker, Allen Tannenbaum,
and Ron Kikinis. Mass preserving mappings and image registration.
In MICCAI \textquoteright 01: Proceedings of the 4th International
Conference on Medical Image Computing and Computer-Assisted Intervention,
pages 120\textendash 127, London, UK, 2001. Springer-Verlag}

\bibitem{Haker2}\textcolor{black}{. Steven Haker, Lei Zhu, Allen
Tannenbaum, and Sigurd Angenent. Optimal mass transport for registration
and warping. Int. J. Comput. Vision, 60(3):225\textendash 240, 2004.}

\bibitem{Neilan1}X. Feng and Michael Neilan. Mixed finite element
methods for the fully nonlinear Monge-Ampère equation based on the
vanishing moment method. SIAM J. Numer. Anal., 47(2):1226\textendash 1250,
2009.

\bibitem{oberma11}B. D. Froese and A. M. Oberman, Convergent finite
difference solvers for viscosity solutions of the elliptic Monge-Ampère
equation in dimensions two and higher, SIAM J. Numer. Anal. 49 (2011),
no. 4, 1692\textendash 1714. MR2831067

\bibitem{oberma06}\textcolor{black}{Adam M. Oberman. Convergent difference
schemes for degenerate elliptic and parabolic equations: Hamilton-Jacobi
equations and free boundary problems. SIAM J. Numer. Anal., 44(2):879\textendash 895
(electronic), 2006.}

\bibitem{ober08env}\textcolor{black}{Adam M. Oberman. Computing the
convex envelope using a nonlinear partial differential equation. Math.
Models Methods Appl. Sci., 18(5):759\textendash 780, 2008.}

\bibitem{oberma08}\textcolor{black}{Adam M. Oberman. Wide stencil
finite difference schemes for the elliptic Monge-Ampère equation and
functions of the eigenvalues of the Hessian. Discrete Contin. Dyn.
Syst. Ser. B, 10(1):221\textendash 238, 2008}

\bibitem{oberma10}\textcolor{black}{Adam M. Oberman and Luis Silvestre.
The Dirichlet problem for the convex envelope. Trans. Amer. Math.
Soc. (to appear), 2010 http://arxiv.org/abs/1007.0773}

\bibitem{prusner}V. I. Oliker and L. D. Prussner. On the numerical
solution of the equation (\ensuremath{\partial} 2 z/\ensuremath{\partial}x
2 )(\ensuremath{\partial} 2 z/\ensuremath{\partial}y 2 ) \textminus{}
(\ensuremath{\partial} 2 z/\ensuremath{\partial}x\ensuremath{\partial}y)
2 = f and its discretizations, I. Numer. Math., 54(3):271\textendash{}
293, 1988.

\bibitem{pogorelov}A. V. Pogorelov, On the improper convex affine
hyperspheres, Geometriae Dedicata 1 (1972), no. 1, 33\textendash 46.
MR0319126 (47 \#7672)

\bibitem{Siltakoski}Siltakoski, J. Equivalence of viscosity and weak
solutions for the normalized p(x)-Laplacian. Calc. Var. 57, 95 (2018).
https://doi.org/10.1007/s00526-018-1375-1

\bibitem{Trudingin1}\textcolor{black}{Neil S. Trudinger and Xu-Jia
Wang, The Bernstein problem for affine maximal hypersur- faces, Invent.
Math. 140 (2000), no. 2, 399\textendash 422, DOI 10.1007/s002220000059.
MR1757001 (2001h:53016)}

\bibitem{trudinger2}\textcolor{black}{Neil S. Trudinger and Xu-Jia
Wang, Affine complete locally convex hypersurfaces, Invent. Math.
150 (2002), no. 1, 45\textendash 60, DOI 10.1007/s00222-002-0229-8.
MR1930881 (2003h:53012)}

\bibitem{trudinger3}\textcolor{black}{Neil S. Trudinger and Xu-Jia
Wang, The affine Plateau problem, J. Amer. Math. Soc. 18 (2005), no.
2, 253\textendash 289, DOI 10.1090/S0894-0347-05-00475-3. MR2137978
(2006e:53071)}

\bibitem{zheligovsky}V. Zheligovsky, O. Podvigina, and U. Frisch.
The Monge-Ampère equation: Various forms and numerical solution. J.
Comput. Phys., 229(13):5043\textendash 5061, 2010.

\end{thebibliography}
\end{document}